\numberwithin{equation}{section}
\def \be{\begin{equs}}
\def \ee{\end{equs}}
\def \P{\mathbb{P}}
\def \E{\mathbb{E}}
\newcommand \TV{\mathrm{TV}}
\def \tmix{\tau_{\mathrm{mix}}}
\def \TV{\mathrm{TV}}
\def \sn{\mathrm{S}^{n-1}}
\def \F{\mathrm{F}}
\def \S{\mathrm{S}}
\newtheorem{theorem}{Theorem}[section]
\newtheorem{lemma}[theorem]{Lemma}
\theoremstyle{plain}
\newtheorem{thm}{Theorem}
\newtheorem*{thm-non}{Theorem}
\theoremstyle{definition}
\newtheorem{defn}[theorem]{Definition}
\newtheorem{remark}[theorem]{Remark}
\begin{document}

\title[Kac's Walk on $n$-sphere mixes in $n\log n$ steps]
{Kac's Walk on the $n$-sphere mixes in $n\log n$ steps}


\author{Natesh S. Pillai$^{\ddag}$}
\thanks{$^{\ddag}$pillai@fas.harvard.edu, 
   Department of Statistics
    Harvard University, 1 Oxford Street, Cambridge
    MA 02138, USA}

\author{Aaron Smith$^{\sharp}$}
\thanks{$^{\sharp}$smith.aaron.matthew@gmail.com, 
   Department of Mathematics and Statistics
University of Ottawa, 585 King Edward Drive, Ottawa
ON K1N 7N5, Canada}

\maketitle






\begin{abstract}
Determining the mixing time of Kac's random walk on the sphere $\sn$ 
is a long-standing open problem. We show that the total variation mixing time of Kac's walk on $\sn$ is between $\frac{1}{2}n \log(n)$ and $200 \,n \log(n)$ for all $n$ sufficiently large. Our bound is thus optimal up to a constant factor, improving on the best-known upper bound of $O(n^{5} \log(n)^{2})$ due to Jiang \cite{Jian11}.  Our main tool is a `non-Markovian' coupling recently introduced by the second author in \cite{Smit11a} for obtaining the convergence rates of certain high dimensional Gibbs samplers in continuous state spaces.
\end{abstract}

\section{Introduction}
In his 1954 paper \cite{Kac1954}, Marc Kac introduced a random walk on a sphere as a model for a Boltzmann gas. Kac's walk on the $(n-1)$-sphere,  
\be
\sn = \{ X \in \mathbb{R}^{n} \, : \, \sum_{i=1}^{n} X[i]^{2} = 1 \},
\ee
is a discrete-time Markov chain $\{X_{t} \in \sn\}_{t \geq 0}$  that evolves as follows. At every step $t$, choose two coordinates $1 \leq i_{t} < j_{t} \leq n$ and an angle $\theta_{t} \in [0, 2 \pi )$ uniformly at random and set
\be [KacStepRep]
 \left( \begin{array}{c}
X_{t+1}[i_t]  \\
X_{t+1}[j_t]
\end{array} \right) &=  \left[ \begin{array}{cc}
\cos(\theta_t) & -\sin(\theta_t) \\
\sin(\theta_t) & \cos(\theta_t) 
\end{array} \right] \left( \begin{array}{c}
X_t[i_t]  \\
X_t[j_t]
\end{array} \right), \\
X_{t+1}[k] &= X_{t}[k], \qquad k \notin \{i_{t}, j_{t} \}.
\ee
Let $\F: \{1,2,\dots,n\} \times \{1,2,\dots,n\} \times [0,2\pi)\times \sn \mapsto \sn$ be the map associated with this representation, so that $X_{t+1} = \F(i_{t},j_{t},\theta_{t},X_{t})$. Physically, Kac motivated this random walk by considering the velocities of $n$ particles in a one-dimensional box. He assumed that these particles were uniformly distributed in space, and the random walk $X_t$ models the change in their velocities over time as collisions occur. The condition that $X_t$ be constrained to a sphere corresponds to the principle of conservation of energy. Understanding the mixing properties of this process is central to Kac's program in kinetic theory.
The article \cite{mischler2013kac} gives a useful description of this program.

To state the main result of this paper, we recall some standard definitions. For measures $\nu_1, \nu_2$ on a measure space $(\Omega, \mathcal{F})$, the \textit{total variation distance} between $\nu_1, \nu_2$ is given by
\be 
\| \nu_1 - \nu_2 \|_{\TV} = \sup_{A \in \mathcal{F}} (\nu_1(A) - \nu_2(A)).
\ee 
We denote the distribution of a random variable $Z$ by $\mathcal{L}(Z)$ and write $Z \sim \nu$ as a shorthand for $\mathcal{L}(Z) = \nu$. \par

Let $\mu$ denote the normalized Haar measure on $\sn$. The Markov chain $\{X_{t} \}_{t \geq 0}$ has $\mu$ as its unique stationary distribution on $\sn$. The \textit{mixing profile} of $X_t$ is defined as 
\be 
\tau(\epsilon) = \min \{t \, : \, \sup_{X_{0} = x \in \sn} \| \mathcal{L}(X_{t}) - \mu \|_{\TV} < \epsilon \} 
\ee 
and the \textit{mixing time} is given by $\tmix = \tau(0.25)$.  

Our main result is:

\begin{thm}\label{MainThm}
Fix $C_{1} <\frac{1}{2}$ and $C_{2} > 200$. If the sequence of times $\{T_1(n)\}_{n\in\mathbb{N}}$ satisfies $T_1(n) < C_{1} n \log(n)$ for all $n$, then 
\be \label{EqMainThmLowerBound} 
\lim_{n \rightarrow \infty} \inf_{X_{0} \in \sn} \| \mathcal{L}(X_{T_1(n)}) - \mu \|_{\TV} = 1.
\ee 
If the sequence of times $\{T_2(n)\}_{n\in\mathbb{N}}$ satisfies $T_2(n) > C_{2} n \log(n)$ for all $n$, then 
\be \label{EqMainThmUpperBound}
\lim_{n \rightarrow \infty} \sup_{X_{0} \in \sn}  \| \mathcal{L}(X_{T_2(n)}) - \mu \|_{\TV} = 0.
\ee 
\end{thm}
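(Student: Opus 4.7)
\emph{Lower bound.} I would establish \eqref{EqMainThmLowerBound} by a coupon–collector style argument. Start the chain at $X_0=e_1=(1,0,\dots,0)$ and let $U_t\subseteq\{2,\dots,n\}$ denote the (random) set of coordinates that have not been selected as part of any pair $\{i_s,j_s\}$ with $s<t$. On $\{U_t\neq\emptyset\}$ the coordinates in $U_t$ still equal their initial value $0$, whereas under $\mu$ any fixed coordinate is $0$ with probability zero. For each coordinate, the probability of not being touched in a given step is $(n-2)/n$, so $\mathbb E|U_t|=(n-1)(1-2/n)^t$. For $t<C_1 n\log n$ with $C_1<1/2$ this is $\gg 1$, and a standard second moment / negative association computation on the indicator variables $\mathbf 1[k\in U_t]$ gives $\mathbb P(U_t\neq\emptyset)\to 1$. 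Taking $A=\{x\in\sn:\exists k,\ x[k]=0\}$ then yields $\mathbb P_{e_1}(X_t\in A)\to 1$ while $\mu(A)=0$, so the TV distance from $\mu$ tends to $1$. This controls $\inf_{X_0}\|\mathcal L(X_{T_1(n)})-\mu\|_{\TV}$ after observing that conjugation by a permutation of the coordinate axes is an automorphism of both the chain and $\mu$, so the choice of $X_0=e_1$ is, up to this symmetry, the worst case we must account for; actually we need only show it for some $X_0$, which gives the supremum and hence the infimum is at least the same by Haar-symmetry of the law of the chain's increments.

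\emph{Upper bound, outline.} For \eqref{EqMainThmUpperBound} I would build a non-Markovian coupling, in the style of \cite{Smit11a}, between a chain $X_t$ started at an arbitrary $x\in\sn$ and a chain $Y_t$ started from $\mu$ (so $Y_t\sim\mu$ for all $t$). At each step both chains read the same coordinate pair $(i_t,j_t)$, but the angles $\theta_t^X,\theta_t^Y$ are sampled from a joint law that is allowed to depend on the entire past; it suffices that each marginal be uniform on $[0,2\pi)$. The goal is $\mathbb P(X_T=Y_T)\to 1$ for $T=C_2 n\log n$, which by the coupling inequality gives \eqref{EqMainThmUpperBound}. Two coordinates $i,j$ can be aligned in one step (i.e.\ $X_{t+1}[i]=Y_{t+1}[i]$ and $X_{t+1}[j]=Y_{t+1}[j]$) \emph{only if} the radii $r^X_{ij}:=\sqrt{X_t[i]^2+X_t[j]^2}$ and $r^Y_{ij}$ agree, in which case a maximal coupling of $\theta^X_t,\theta^Y_t$ succeeds with probability of order $1$. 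The non-Markovian trick is to precede each attempted alignment by one or two ``radius–matching'' steps, in which we rotate a third coordinate into one of $\{i,j\}$ in each chain so as to equalize those radii; here the extra freedom of choosing the pair of angles non-independently (even conditionally on the past) is what allows the radii to match exactly with positive probability while preserving the uniform marginals.

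\emph{Key steps.} (i) \textbf{Regularity / burn-in:} prove that after $O(n)$ steps, with probability $\ge 1-n^{-10}$, every coordinate of both $X_t$ and $Y_t$ has magnitude in $[c/\sqrt n,\,C(\log n)/\sqrt n]$. This rules out degenerate configurations where the two radii involved lie in disjoint ranges and the matching step would fail. One proves this by combining the fact that $Y_t\sim\mu$ (so standard sphere concentration applies) with a direct one-coordinate Doob–martingale argument for $X_t$. (ii) \textbf{Per-step contraction of the defect:} define $D_t=\{k: X_t[k]\neq Y_t[k]\}$. Show that conditional on the regularity event, each time the pair $(i_t,j_t)$ is drawn with $\{i_t,j_t\}\cap D_t\neq\emptyset$, the non-Markovian step succeeds in removing at least one coordinate from $D_t$ with probability $\ge p>0$, and otherwise leaves $D_t$ unchanged. (iii) \textbf{Coupon-collector upper bound:} since each fixed coordinate is selected in a single step with probability $2/n$, over $Cn\log n$ steps each coordinate is touched $\Theta(\log n)$ times, and a union bound over the $n$ coordinates times the $O(\log n)$ failures needed to keep a coordinate misaligned gives that $D_T=\emptyset$ with probability $\ge 1-o(1)$ for $C$ large enough. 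Tracking constants carefully yields the numerical bound $200$.

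\emph{Main obstacle.} The technical heart is step (ii): constructing the joint law of $(\theta^X_t,\theta^Y_t)$ that exactly matches the two radii $r^X_{ij},r^Y_{ij}$ (or compensates for the mismatch using a pair of steps), keeps both marginals uniform, and \emph{preserves already-aligned coordinates} except for those being updated. The preservation clause is what forces non-Markovianity: once a coordinate $k$ has been aligned, any future step involving $k$ must re-use the same angle in both chains unless we are prepared to spend further steps to re-align. The bookkeeping for how often an aligned coordinate is disturbed, versus how often it is successfully re-aligned, is the crux of the $n\log n$ bound, and is where the choice of the constant $200$ (versus the sharper $1/2$) comes from.
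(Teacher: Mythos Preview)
Your lower bound is essentially the paper's argument, with one caveat: the statement concerns $\inf_{X_0}$, so you must prove it for \emph{every} starting point, and your last sentence about ``Haar-symmetry'' is not a valid reduction (the starting point breaks the symmetry). The fix is easy and is exactly what the paper does: for arbitrary $x$, if some coordinate $k$ has not been selected by time $t$ then $X_t[k]=x[k]$, so $X_t$ lies in $\bigcup_i\{X:X[i]=x[i]\}$, a set of $\mu$-measure zero regardless of $x$.

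The upper bound proposal has a genuine structural gap. Your scheme tracks the defect set $D_t=\{k:X_t[k]\neq Y_t[k]\}$ and tries to shrink it greedily, but this set is not monotone under any coupling that keeps both marginals correct. If $k\in D_t^c$ is aligned and the random pair $(i_t,j_t)$ hits $k$ together with some $j\in D_t$, then using the \emph{same} angle in both chains (your ``preservation clause'') destroys the alignment at $k$, because the rotation mixes $X_t[k]$ with $X_t[j]\neq Y_t[j]$; using different angles destroys it as well unless the radii already agree. So each alignment attempt on a defective coordinate typically re-injects a previously aligned neighbour back into $D_t$. The paper makes exactly this point in its discussion of greedy couplings and notes that any Markovian version of your scheme gives $\mathbb E[\tau]\gtrsim n^2$, not $n\log n$. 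Your ``radius-matching'' pre-steps do not resolve this: the coordinate pairs $(i_t,j_t)$ are i.i.d.\ uniform and shared by both chains, so you cannot arrange for a chosen third coordinate to appear on demand; and even if you look ahead non-Markovianly, the matching step disturbs that third coordinate, so the bookkeeping you defer to the last paragraph is in fact the whole problem.

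What the paper actually does is different in two essential ways. First, there is a long contraction phase (roughly $193\,n\log n$ steps) using a ``proportional'' coupling under which $\mathbb E\|A_t-B_t\|_2^2$ (with $A_t[i]=X_t[i]^2$, $B_t[i]=Y_t[i]^2$) contracts by a factor $1-\tfrac{1}{2n}$ per step; this drives the chains to within $n^{-47}$ of each other before any coalescence is attempted. Your $O(n)$-step burn-in only targets coordinate regularity, not closeness, and without polynomial closeness the coalescence phase cannot succeed. Second, the coalescence coupling is built \emph{backwards in time}: one first samples all pairs $\{(i_t,j_t)\}$ on the window, then defines partitions $\mathcal P_T=\{\{1\},\dots,\{n\}\}$, $\mathcal P_{t}$ obtained from $\mathcal P_{t+1}$ by merging the two blocks hit by $(i_t,j_t)$, and couples the angles so that the block sums $\sum_{i\in P}X_{t+1}[i]^2=\sum_{i\in P}Y_{t+1}[i]^2$ are preserved for every $P\in\mathcal P_{t+1}$. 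The nested structure $\mathcal P_{t+1}$ refines $\mathcal P_t$ guarantees that once a block-sum identity holds it is never broken, which is precisely the monotonicity your forward defect-set argument lacks. The $n\log n$ time for this phase then comes from Erd\H os--R\'enyi connectivity (so that $\mathcal P_{T_0}=\{[n]\}$), not from coupon collecting on individual coordinates.
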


Theorem \ref{MainThm} implies that, for any $\delta > 0$ and $n > N_{0}(\delta)$ sufficiently large, the mixing time $\tmix$ of Kac's walk on $\sn$ satisfies 
\be 
(1-\delta)\frac{1}{2} n \log(n) \leq \tmix \leq (1+\delta)\, 200\, n \log(n).
\ee 
Theorem \ref{MainThm} also establishes the stronger result that Kac's walk exhibits \textit{pre-cutoff} with window $[\frac{1}{2} n \log(n),200 \,n \log(n)]$ (see \textit{e.g.}, chapter 18 of \cite{LPW09} for an introduction to cut-off). We conjecture that Kac's walk also exhibits cutoff at time $2 n \log(n)$. While we have made no effort to optimize the size of the cutoff window obtained in this paper and both our upper and lower bounds for $\tmix$ could be easily improved with our methods, a proof of cut-off does not seem to follow from our methods and will likely require new ideas.

\subsection{Relationship to Prior Work}
There is an extensive literature on the mixing time of Kac's walk under various measures. In the sequence of papers \cite{jan2001, carlen2003, maslen2003, caputo2008spectral}, the spectrum of Kac's walk on $\S^{n}$ was bounded and then computed. Although these results imply a convergence \textit{rate} for Kac's walk in $L^{2}$, and a bound on the \textit{distance} to stationarity in $L^{2}$ implies a bound on the total variance distance to stationarity, these bounds \textit{do not} imply any bound at all on the total variation mixing time of Kac's walk. This is because, when $\mathcal{L}(X_{0})$ is concentrated at a point, the initial $L^{2}$ distance to stationarity is not finite. Stronger entropy bounds \cite{carlen2008entropy} also give no bound on the total variation mixing of Kac's walk. The previous best known bound on the mixing time of Kac's walk on $\S^n$, due to Jiang \cite{Jian11}, is $O(n^{5} \log(n)^{2})$. \par
There are other versions of Kac's walk; see the literature review in \cite{Oliv07} for an overview of one of the most-studied such walks. The recent works \cite{mischler2013kac,hauray2014kac} discuss the relationship of the study of Kac's walk to the original goals of Kac's paper \cite{Kac1954}.

\section{Technical Overview} 
We bound the mixing time of a Markov chain by applying the standard \textit{coupling lemma} (see \textit{e.g.}, Theorem 5.2 of \cite{LPW09}):
\begin{lemma}\label{LemmaIneqCoup}
Let $K$ be the transition kernel of a Markov chain with unique stationary distribution $\nu$ on state space $\Omega$. Let $\{X_{t} \}_{t \geq 0}$, $\{Y_{t} \}_{t \geq 0}$ be two Markov chains started at $X_{0} = x \in \Omega$ and $Y_{0} \sim \nu$. Define the coalescence time 
\be \label{DefCoalTime}
\tau(x) = \min \{ t \, : \, X_{t} = Y_{t} \}.
\ee Assume that the coupling of $\{X_{t} \}_{t \geq 0}$, $\{Y_{t} \}_{t \geq 0}$ satisfies $ X_{t} = Y_{t}$ for all $t \geq \tau(x)$. Then for any $ t \geq 0$,
\be 
\| \mathcal{L}(X_{t}) - \nu \|_{\TV} \leq \P[\tau(x) > t].
\ee 
\end{lemma}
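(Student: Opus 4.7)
The plan is to exploit two ingredients: the stationarity of $\nu$ for the kernel $K$, and the hypothesis that the coupling is \emph{sticky} after coalescence, i.e.\ $X_t = Y_t$ for all $t \geq \tau(x)$. These together will let me convert a supremum over measurable sets into a probability of non-coalescence.

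First I would observe that since $Y_0 \sim \nu$ and $\nu$ is stationary under $K$, we have $Y_t \sim \nu$ for all $t \geq 0$ by induction on $t$. Consequently, for any fixed event $A \in \mathcal{F}$, the quantity $\nu(A) - \mathcal{L}(X_t)(A)$ can be rewritten as $\mathbb{P}[Y_t \in A] - \mathbb{P}[X_t \in A]$, since the marginals of a coupling agree with the intended laws. The problem thus reduces to bounding
\begin{equation*}
\sup_{A \in \mathcal{F}} \bigl( \mathbb{P}[X_t \in A] - \mathbb{P}[Y_t \in A] \bigr).
\end{equation*}

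Next, I would split each of these probabilities according to whether $\{X_t = Y_t\}$ holds. On the event $\{X_t = Y_t\}$ the contributions cancel exactly, leaving
\begin{equation*}
\mathbb{P}[X_t \in A] - \mathbb{P}[Y_t \in A] = \mathbb{P}[X_t \in A,\, X_t \neq Y_t] - \mathbb{P}[Y_t \in A,\, X_t \neq Y_t] \leq \mathbb{P}[X_t \neq Y_t].
\end{equation*}
Now the stickiness hypothesis kicks in: $\{X_t \neq Y_t\} \subseteq \{\tau(x) > t\}$, because if $\tau(x) \leq t$ then by assumption $X_t = Y_t$. Hence $\mathbb{P}[X_t \neq Y_t] \leq \mathbb{P}[\tau(x) > t]$, and taking the supremum over $A$ yields the desired bound.

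Since this is a textbook-style coupling lemma, I do not anticipate a genuine obstacle; the only subtlety worth being careful about is that the bound on $\sup_A ( \mathbb{P}[X_t \in A] - \mathbb{P}[Y_t \in A])$ matches the definition of total variation distance used in the paper (a one-sided supremum rather than $\tfrac{1}{2}\sum|\cdot|$), so no extra factor enters. The real work of the paper will be in designing a coupling for which $\mathbb{P}[\tau(x) > t]$ is small at the claimed time scale $n \log n$, not in verifying this reduction.
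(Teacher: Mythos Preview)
Your argument is correct and is the standard textbook proof of the coupling inequality; the paper itself does not prove this lemma but merely cites Theorem~5.2 of \cite{LPW09}, where exactly this argument appears. There is nothing to add.
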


The skeleton of the coupling argument in our paper is quite similar to that in \cite{Smit11a}. As in that paper, we construct a coupling of two copies of Kac's walk over two phases. In the first phase, we construct a `proportionate coupling' (see Definition \ref{def:propcoup}) between two copies of Kac's walk. We show that in this first phase, the two copies become close to each other in the Euclidean norm in  $O(n \log(n))$ time. In the second phase, also of length $O( n \log(n))$, we construct a non-Markovian coupling that allows two copies of Kac's walk that start very close to each other to actually coalesce. We then apply Lemma \ref{LemmaIneqCoup}, bounding the probability $\P[\tau(x) > T]$ that the random coalescence time $\tau(x)$ is larger than a particular deterministic time $T \approx 200 n \log(n)$.

This `two-step' approach, with an initial `contracting' phase followed by a second `coalescing' phase, is a popular approach for proving the convergence of Metropolis-Hastings chains on continuous state spaces (see \textit{e.g.,} \cite{roberts2002one, madras2010quantitative}). However, there are some difficulties in extending this approach to the study of high-dimensional Gibbs samplers such as Kac's walk. For most Metropolis-Hastings chains, two nearby Markov chains can coalesce in a single step, and so the coalescence phase can be of length 1. Gibbs samplers do not share this property: in $n$ dimensions, arbitrarily nearby Markov chains generally have 0 probability of coalescing in less than $n$ steps. This means that the coalescence phase is quite lengthy, and it becomes necessary to ensure that the two chains do not drift too far apart during this period. We also mention that the presence of constraints on the state space (in this case, the constraint that $X_{t}$ must be an element of $\sn$ and cannot be an arbitrary element of $\mathbb{R}^{n}$) presents additional complications that are not present for most Metropolis-Hastings chains. In our case, this constraint guarantees that any Markovian coupling scheme in the coalescence phase will have $\E[\tau(x)] > C \, n^{2}$ for some $C > 0$. The same phenomenon occurs in \cite{Smit11a} and many well-studied discrete chains (see \textit{e.g.,} Lemma 8 of \cite{bormashenko2011coupling}). See \cite{hayes2003non,hayes2007variable} for earlier examples of non-Markovian couplings applied to the study of Markov chains on discrete state spaces and \cite{connor2008optimal, kendall2015coupling} for more general discussion of when a `good' Markovian coupling exists.

\subsection{Structure of the Paper}
Section \ref{SecCont} describes phase 1 of the coupling, Section \ref{SecColl} describes phase 2, and the proof of Theorem \ref{MainThm} is given in Section \ref{SecMainThm}.

\subsection{Informal Discussion of `Greedy' Couplings}
Before giving a formal description of the second phase of our coupling in Section \ref{DefCoupCon}, we briefly explain where it comes from. 

The obvious `greedy' approach to coupling a pair of Gibbs samplers $\{W_{t},Z_{t}\}_{t \geq 0}$ in $\mathbb{R}^{n}$ is to run a Markovian coupling that matches as many coordinates as possible at each step. That is, we try to  grow the set $E_{t} = \{ 1 \leq i \leq n \, : \, W_{t}[i] = Z_{t}[i] \}$ by as much as possible at every step. Such a greedy coupling always exists, but it is easiest to analyze if $E_{t} \subset E_{t+1}$ for all $t$ - that is, if coordinates that agree once will agree forever. To phrase this second condition in a slightly unfamiliar way, this means that the sequence of hyperplanes 
\be \label{EqGreedyPlanes}
H_{t} = \{v \in \mathbb{R}^{n} \, : \, \forall \, i \in E_{t}, \, v[i] = 0 \} \ee
should satisfy
\be \label{IneqWish1}
H_{t+1} &\subset H_{t} \\
(W_{t} - Z_{t}) &\in H_{t}
\ee 
for all $t \geq 0$. When this occurs, the coalescence time $\tau(x)$ satisfies
\be \label{IneqWish2}
\tau(x) &= \inf \{t \geq 0 \, : \, H_{t} = \{ 0 \} \}. 
\ee

We claim that no Markovian coupling of the square $\{X_{t}^{2}, Y_{t}^{2} \}_{t \geq 0}$ of Kac's walk can satisfy Inequality \eqref{IneqWish1}. Indeed, for two copies of Kac's walk $X_t, Y_t$, the dynamics often \textit{force} $X_{t+1}^{2} - Y_{t+1}^{2}$ to `pop out' out of a candidate $H_{t}$.  However, our notation suggests a much larger family of `greedy' couplings: \textit{any} sequence $\{H_{t}\}_{t \geq 0}$ that satisfies Inequalities \eqref{IneqWish1} and \eqref{IneqWish2} gives rise to a `greedy' coupling of $\{Z_{t}, W_{t}\}_{t \geq 0}$, even if the hyperplanes aren't of the form \eqref{EqGreedyPlanes}.

There are many candidates for the sequence $\{H_{t}\}_{t \geq 0}$. Roughly speaking, in this paper we construct the simplest possible sequence $\{H_{t}\}_{t \geq 0}$ that is not of the form \eqref{EqGreedyPlanes}: rather than starting with $H_{0} = \mathbb{R}^{n}$ and greedily \textit{reducing} the dimension of our candidate $H_{t}$ as quickly as possible until it reaches $\{0\}$, which gives hyperplanes of the form \eqref{EqGreedyPlanes}, we start with $H_{T} = \{0\}$ at some fixed time $T >0$, and then go back in time while greedily \textit{increasing} the dimension of our candidate $H_{t}$ as quickly as possible until it reaches $\mathbb{R}^{n}$. This sequence of candidate hyperplanes, which are described carefully in Section \ref{DefCoupCon} (Equations \eqref{EqDefOfGoodChain} and \eqref{EqDefOfGoodChain2} define the sequence of hyperplanes used in this paper) turn out to satisfy \eqref{IneqWish1} and \eqref{IneqWish2} with high probability for Kac's walk. Because of this, the resulting non-Markovian coupling is almost as straightforward to analyze as the standard greedy Markovian coupling.

Our construction for $\{H_{t}\}_{t \geq 0}$ will not satisfy \eqref{IneqWish1} with high probability for all Markov chains, just as the usual construction in \eqref{EqGreedyPlanes} did not. Nonetheless, we hope that this general approach may yield other sophisticated couplings that are straightforward to analyze. We also point out that, when coupling Gibbs samplers, it is very common to force the two Markov chains to always select the same sequence of coordinates at every step\footnote{In the notation representation \eqref{KacStepRep}, this corresponds to choosing the same sequence $\{i_{t}, j_{t}\}_{t \geq 0}$.}, and we make this choice in the present paper. In this situation, the above coupling constructions would be applied to the Markov chains that are given after \textit{conditioning} on the sequence of coordinates to be updated. This modification makes no difference to the remainder of the discussion.

\section{Contraction Estimates} \label{SecCont}
 In this section, we couple two copies of the Kac's walk on $\sn$ and obtain a one-step contraction for the distance between these two copies in a suitable psuedo-metric. This is achieved via the following coupling.
 \begin{defn} [Proportional Coupling] \label{def:propcoup}
We define a coupling of two copies $\{X_t\}_{t \geq 0}, \{Y_t\}_{t \geq 0}$ of Kac's walk for a single step. Fix $X_{0},Y_{0} \in  \sn$. Let $(i_0,j_0,\theta_0)$ be the \textit{update variables} used by the random walk $X_{1}$ in representation \eqref{KacStepRep}, so that $X_{1} = \F(i_0,j_0,\theta_0,X_{0})$. Choose $\varphi \in [0, 2\pi)$ uniformly at random among all angles that satisfy  \footnote{If $X_{0}[i_{0}] = X_{0}[j_{0}] = 0$, all angles satisfy these equations. Otherwise, they have a unique solution, and the value of $\varphi-\theta_{0}$ modulo $2 \pi$ does not depend on $\theta_{0}.$ }
\be 
X_{1}[i_{0}] &= \sqrt{X_{0}[i_{0}]^{2} + X_{0}[j_{0}]^{2}} \cos (\varphi) \\
X_{1}[j_{0}] &= \sqrt{X_{0}[i_{0}]^{2} + X_{0}[j_{0}]^{2}} \sin (\varphi). 
\ee 
Then choose $\theta_{0}' \in [0, 2 \pi)$ uniformly among the angles that satisfy
\be 
\F(i_0,j_0,\theta_{0}',Y_{0})[i_{0}] &= \sqrt{Y_{0}[i_{0}]^{2} + Y_{0}[j_{0}]^{2}} \cos (\varphi) \\
\F(i_0,j_0,\theta_{0}',Y_{0})[j_{0}] &= \sqrt{Y_{0}[i_{0}]^{2} + Y_{0}[j_{0}]^{2}} \sin (\varphi)
\ee 
and set $Y_{1} = \F(i_0,j_0,\theta_{0}',Y_{0})$. 
\end{defn}
\begin{remark}
This coupling forces $Y_{1}$ to be as close as possible to $X_{1}$ in the Euclidean distance. For example, in dimension $n=2$, we always have $X_{1} = Y_{1}$ under this coupling. For $X_0, Y_0 \in \sn$, this coupling forces the three points $(0,0)$, $(X_{1}[i_{0}],X_{1}[j_{0}])$ and  $(Y_{1}[i_{0}],Y_{1}[j_{0}])$ to be collinear; in particular, it forces $X_{1}[i_{0}] Y_{1}[i_{0}], \, X_{1}[j_{0}] Y_{1}[j_{0}] \geq 0$.
\end{remark}

We set some notation for the remainder of the paper. Whenever we consider a pair of copies of Kac's walk $\{X_{t}\}_{t \geq 0}$, $\{Y_{t}\}_{t \geq 0}$, we define 
\be \label{EqABdef}
A_{t}[i] = X_{t}[i]^{2}, \qquad B_{t}[i] = Y_{t}[i]^{2}
\ee 
for all $t \geq 0$ and all $1 \leq i \leq n$.  For $x \in \mathbb{R}^n$, we define
\be
\|x\|_1 &= \sum_i |x[i]|, \quad \|x\|_2 = (\sum_i x[i]^2)^{1/2}.
\ee
Finally, we define $[n] = \{1,2,\ldots,n\}$.

\begin{lemma} \label{LemmaContEst}
Let $X_0, Y_{0} \in \sn$. For $t \geq 0$, couple $(X_{t+1}, Y_{t+1})$ conditional on $(X_{t},Y_{t})$ according to the coupling in Definition \ref{def:propcoup}. Then for any  $t \geq 0$, Kac's walk on $\sn$ satisfies
\be \label{IneqContractionConclusion}
\E[\sum_{i=1}^{n}(A_{t}[i] - B_{t}[i])^{2}] \leq  2(1 - \frac{1}{2n})^{t} .
\ee 
\end{lemma}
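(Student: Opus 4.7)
The plan is to track $D_t := \sum_i(A_t[i]-B_t[i])^2$ directly, establish a one-step contraction $\E[D_{t+1}\mid \mathcal{F}_t] \le (1 - \tfrac{1}{2n})D_t$, and then iterate from a crude initial bound $D_0 \le 2$.

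First I would compute how $\Delta_t := A_t - B_t$ transforms in one step. Under the proportional coupling, both $(X_{t+1}[i_0],X_{t+1}[j_0])$ and $(Y_{t+1}[i_0],Y_{t+1}[j_0])$ lie on the ray at angle $\varphi$, so the squares satisfy
\be
A_{t+1}[i_0] = (A_t[i_0]+A_t[j_0])\cos^2\varphi, \qquad A_{t+1}[j_0] = (A_t[i_0]+A_t[j_0])\sin^2\varphi,
\ee
and the analogous identity for $B_{t+1}$. Subtracting gives
\be
\Delta_{t+1}[i_0]^2 + \Delta_{t+1}[j_0]^2 = \bigl(\Delta_t[i_0]+\Delta_t[j_0]\bigr)^2 \bigl(\cos^4\varphi + \sin^4\varphi\bigr),
\ee
and $\Delta_{t+1}[k] = \Delta_t[k]$ for $k \notin \{i_0,j_0\}$. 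A short calculation shows the angle $\varphi = \theta_0 + \alpha$ (where $\alpha$ is the phase of $(X_t[i_0],X_t[j_0])$) is uniform on $[0,2\pi)$ because $\theta_0$ is; using $\cos^4\varphi + \sin^4\varphi = 1 - \tfrac12 \sin^2(2\varphi)$ yields $\E[\cos^4\varphi+\sin^4\varphi] = 3/4$.

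Next I would average over the uniform unordered pair $(i_0,j_0)$. The per-pair expected change is
\be
\tfrac{3}{4}(\Delta_t[i]+\Delta_t[j])^2 - \Delta_t[i]^2 - \Delta_t[j]^2 = -\tfrac{1}{4}\bigl(\Delta_t[i]^2+\Delta_t[j]^2\bigr) + \tfrac{3}{2}\Delta_t[i]\Delta_t[j].
\ee
Summing over $i<j$ produces
\be
-\tfrac{n+2}{4}\,D_t + \tfrac{3}{4}\Bigl(\sum_i \Delta_t[i]\Bigr)^2.
\ee
The \emph{key input} is the sphere constraint: $\sum_i \Delta_t[i] = \|X_t\|_2^2 - \|Y_t\|_2^2 = 0$, so the cross term vanishes and we get
\be
\E[D_{t+1}\mid \mathcal{F}_t] = \Bigl(1 - \tfrac{n+2}{2n(n-1)}\Bigr)D_t \le \Bigl(1 - \tfrac{1}{2n}\Bigr)D_t.
\ee

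Finally I would iterate and handle the initial condition. Since $A_0[i],B_0[i]\in[0,1]$, we have $(A_0[i]-B_0[i])^2 \le |A_0[i]-B_0[i]| \le A_0[i]+B_0[i]$, and summing gives $D_0 \le \sum_i(A_0[i]+B_0[i]) = 2$. Taking expectations and iterating the one-step bound yields \eqref{IneqContractionConclusion}. The only conceptual subtlety is recognizing that, without the cancellation forced by the sphere constraint on the aggregate $\sum_i \Delta_t[i]$, the positive cross term $\tfrac{3}{2}\Delta_t[i]\Delta_t[j]$ would prevent contraction; the remaining work is routine algebra.
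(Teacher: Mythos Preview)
Your proposal is correct and follows essentially the same route as the paper: both compute the one-step conditional expectation of $\sum_k (A_{t+1}[k]-B_{t+1}[k])^2$ using $\E[\cos^4\varphi+\sin^4\varphi]=3/4$, invoke the sphere constraint $\sum_k(A_t[k]-B_t[k])=0$ to eliminate the cross term, and arrive at the identical contraction factor $1-\tfrac{n+2}{2n(n-1)}\le 1-\tfrac{1}{2n}$ before iterating with $D_0\le 2$. The only differences are cosmetic---you track the expected \emph{change} in $D_t$ while the paper writes out the full expectation, and you are slightly more explicit about the uniformity of $\varphi$ and the bound $D_0\le 2$.
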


\begin{proof}
Fix $X_0, Y_{0} \in \sn$. For $t \geq 0$, couple $(X_{t+1}, Y_{t+1})$ conditional on $(X_{t},Y_{t})$ according to the coupling in Definition \ref{def:propcoup}, with update variables $i_{t},j_{t}, \theta_t, \theta_{t}'$ and additional variable $\varphi$ as in that definition.  We calculate
\be 
\E[ \sum_{k=1}^{n} &(A_{1}[k] - B_{1}[k])^{2}] \\
 &= \frac{2}{n(n-1)} \sum_{1 \leq i < j \leq n} \E[ \sum_{k=1}^{n} (A_{1}[k] - B_{1}[k])^{2} \, | \, (i_{0}, j_{0}) = (i,j)] \\
&= \frac{2}{n(n-1)} \frac{(n-1)(n-2)}{2} \sum_{k=1}^{n} ( A_{0}[k] - B_{0}[k])^{2}  \\
&\hspace{1cm}+ \frac{2}{n(n-1)} \sum_{i < j} \E[ (( A_{0}[i] + A_{0}[j]) \cos(\varphi)^{2} - ( B_{0}[i] + B_{0}[j]) \cos(\varphi)^{2} )^{2}]  \\
&\hspace{1cm}+ \frac{2}{n(n-1)} \sum_{i < j} \E[ (( A_{0}[i] + A_{0}[j]) \sin(\varphi)^{2} - ( B_{0}[i] + B_{0}[j]) \sin(\varphi)^{2} )^{2}] \\
&= \frac{n-2}{n} \sum_{k=1}^{n} ( A_{0}[k] - B_{0}[k])^{2} + \frac{4}{n(n-1)} \E[\cos(\varphi)^{4}] \sum_{i < j} (( A_{0}[i] + A_{0}[j])  - ( B_{0}[i] + B_{0}[j]) )^{2}\\
&= (1 -\frac{2}{n}) \sum_{k=1}^{n} ( A_{0}[k] - B_{0}[k])^{2} + \frac{3}{2n(n-1)} \sum_{i < j} (( A_{0}[i] + A_{0}[j])  - ( B_{0}[i] + B_{0}[j]) )^{2}\\
&=  (1 -\frac{2}{n}) \sum_{k=1}^{n} ( A_{0}[k] - B_{0}[k])^{2} \\
&\hspace{1cm}+ \frac{3}{2n(n-1)} \sum_{i < j} ( (A_{0}[i] - B_{0}[i])^{2} + (A_{0}[j] - B_{0}[j])^{2} + 2 (A_{0}[i] - B_{0}[i])(A_{0}[j] - B_{0}[j]) ) \\
&= (1 -\frac{2}{n}) \sum_{k=1}^{n} ( A_{0}[k] - B_{0}[k])^{2}\\
&\hspace{1cm} + \frac{3(n-1)}{2n(n-1)} \sum_{k=1}^{n} (A_{0}[k] - B_{0}[k])^{2} + \frac{3}{n(n-1)} \sum_{i < j}  (A_{0}[i] - B_{0}[i])(A_{0}[j] - B_{0}[j]) \\
&= (1 - \frac{1}{2n}) \sum_{k=1}^{n} ( A_{0}[k] - B_{0}[k])^{2} + \frac{3}{n(n-1)} \sum_{i < j}  (A_{0}[i] - B_{0}[i])(A_{0}[j] - B_{0}[j]) \\
&= (1 - \frac{1}{2n}) \sum_{k=1}^{n} ( A_{0}[k] - B_{0}[k])^{2} + \frac{3}{2n(n-1)} ( (\sum_{k=1}^{n}(A_{0}[k] - B_{0}[k]))^{2} - \sum_{k=1}^{n}(A_{0}[k] - B_{0}[k])^{2} ) \\
&= (1 - \frac{1}{2n} - \frac{3}{2n(n-1)})  \sum_{k=1}^{n} ( A_{0}[k] - B_{0}[k])^{2}. 
\ee 
Thus we have 
\be
\label{IneqMainContractionCalc}
\E[ \sum_{k=1}^{n} &(A_{1}[k] - B_{1}[k])^{2}] \leq (1 - {1 \over 2n}).
\ee
For $t \geq 0$, let $\mathcal{F}_{t} = \sigma( \cup_{0 \leq s \leq t}(X_{s},Y_{s}) )$ be the $\sigma$-algebra generated by the random variables $X_{0},\ldots,X_{t}$ and $Y_{0},\ldots,Y_{t}$. Repeatedly applying Equation \eqref{IneqMainContractionCalc}, we have for all $t \geq 0$ that
\be 
\E[\sum_{k=1}^{n} (A_{t}[k] - B_{t}[k])^{2} ] &= \E[ \E[ \sum_{k=1}^{n} (A_{t}[k] - B_{t}[k])^{2} | \mathcal{F}_{t-1}] ] \\
&\leq (1 - \frac{1}{2n}) \E[ \sum_{k=1}^{n} (A_{t-1}[k] - B_{t-1}[k])^{2} ] \\
&\leq (1 - \frac{1}{2n})^{t} \sum_{k=1}^{n} \E[(A_{0}[k] - B_{0}[k])^{2} ] \leq 2 (1 - \frac{1}{2n})^{t}.
\ee 
This completes the proof.
\end{proof}

\begin{remark}
We note that $X_{t}$ can be recovered exactly from $(A_{t}, \{ \frac{X_{t}[i]}{|X_{t}[i]|} \}_{i=1}^{n}) \in [0,1]^{n} \times \{-1,1\}^{n} \equiv \mathcal{X}$ by a map that we temporarily denote by $G \, : \, \mathcal{X} \mapsto \sn$, whose inverse exists except on a set of measure 0. Define the metric $d_{\mathcal{X}}$ on $\mathcal{X}$ by $d_{\mathcal{X}}((A,L), (A',L')) = \| A - A' \|_{2} + \sum_{i=1}^{n} \textbf{1}_{L[i] \neq L'[i]}$. Inequality \eqref{IneqContractionConclusion} can be combined with standard bounds on the contraction of simple random walk on the hypercube under the Hamming metric to show that, under the proportional coupling,
\be 
\E[ d_{\mathcal{X}}(G^{-1}(X_{t+1}), G^{-1}(Y_{t+1})) | X_{t},Y_{t}] \leq (1 - \frac{c}{n}) d_{\mathcal{X}}(G^{-1}(X_{t}), G^{-1}(Y_{t})) 
\ee  
for some $0 < c < \infty$. This contraction estimate, combined with Proposition 30 of \cite{ollivier2009ricci}, implies that the relaxation time of Kac's walk is $O(n)$. As shown in \cite{jan2001, carlen2003, maslen2003, caputo2008spectral}, this is the correct order for the relaxation time, and our argument provides a short alternative proof of the main result in \cite{jan2001}. However, we cannot use this approach to calculate the exact spectral gap, which was derived in \cite{maslen2003, carlen2003, caputo2008spectral}.
\end{remark}

We close this section with two elementary bounds. For $S \subset \{1,2,\dots,n\}$, let $\tau_{S} = \min \{ t \, : \, S \subset \cup_{0 \leq s \leq t } \{ i_{s}, j_{s} \} \, \}$. By the standard `coupon collector' bound, 
\be \label{IneqUseCouponCont}
\P[\tau_{[n]} > t] \leq n e^{-\frac{t}{n}}
\ee 
for all $ t \geq 0$. Let  $S = \{ 1 \leq i \leq n \, : \, X_{0}[i] \, Y_{0}[i] < 0\}$. By Definition \ref{def:propcoup}, we have $\min_{1 \leq i \leq n} X_{t}[i] \, Y_{t}[i] \geq 0$ for all $t \geq \tau_{S}$.
\begin{lemma} \label{LemmaSmallValues}
Let $Y \sim \mu$. Then for all $1 < c < \infty$ and any $1 \leq i \leq n$,
\be 
\P[Y[i] \leq n^{-3c}] \leq 2 n^{-c+1}.
\ee 
\end{lemma}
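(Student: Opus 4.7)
My plan is to use the standard Gaussian representation of Haar measure on the sphere: $Y$ is equidistributed with $Z/\|Z\|_2$, where $Z = (Z_1, \ldots, Z_n)$ has i.i.d.\ standard normal entries. Since $\mu$ is invariant under sign flips of any single coordinate, I read the statement in the two-sided sense $\P[|Y[i]| \leq n^{-3c}]$ (a literal one-sided reading would give $\P[Y[i] \leq 0] = 1/2$ by symmetry, violating the claim for large $c$). By rotational invariance of $\mu$ it suffices to consider $i = 1$.

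The first step is to introduce a good high-probability event for $\|Z\|_2$, so that a lower bound on $|Y[1]|$ translates into one on $|Z_1|$. Since $\{\|Z\|_2^2 \leq 2n\}\cap\{|Y[1]| \leq n^{-3c}\}$ forces $|Z_1| \leq \sqrt{2n}\, n^{-3c}$, a union bound gives
\begin{equation*}
\P[|Y[1]| \leq n^{-3c}] \;\leq\; \P\bigl[|Z_1| \leq \sqrt{2n}\, n^{-3c}\bigr] \;+\; \P\bigl[\|Z\|_2^2 > 2n\bigr].
\end{equation*}

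The second step is to bound each term. For the first, I would invoke the trivial density bound $\P[|Z_1| \leq \epsilon] \leq \epsilon \sqrt{2/\pi} \leq \epsilon$, which gives $\sqrt{2}\, n^{1/2 - 3c}$; the elementary inequality $1/2 - 3c \leq 1 - c$ (equivalent to $c \geq -1/4$) shows this is bounded by $\sqrt{2}\, n^{1-c}$. For the second, a standard Chernoff bound for a $\chi^2_n$ random variable gives $\P[\|Z\|_2^2 > 2n] \leq e^{-\alpha n}$ for some absolute $\alpha > 0$, which is dominated by $n^{1-c}$ once $n$ is sufficiently large. Summing yields $\P[|Y[1]| \leq n^{-3c}] \leq 2 n^{1-c}$ for all large $n$, and the remaining finitely many small $n$ are covered by the trivial observation that $2 n^{-c+1} \geq 1$ on that range.

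I expect no real obstacle; the calculation is routine, with the only subtlety being to ensure the bound holds uniformly in $n$ rather than merely asymptotically. A fully self-contained alternative is to work directly with the explicit marginal density $f(y) = C_n (1-y^2)^{(n-3)/2}$ on $[-1,1]$, where $C_n = \Gamma(n/2)/(\sqrt{\pi}\,\Gamma((n-1)/2))$, bound $C_n \leq \sqrt{n}$ by a standard Gamma-ratio estimate, and integrate to obtain $\P[|Y[1]| \leq a] \leq 2 a \sqrt{n}$, from which $a = n^{-3c}$ yields $2 n^{1/2 - 3c} \leq 2 n^{1-c}$ for $c > 1$.
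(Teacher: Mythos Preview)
Your approach is essentially the paper's: use the Gaussian representation of Haar measure and a union bound splitting the event according to whether the norm in the denominator is large or the numerator is small. Your two-sided reading is correct; the paper's own proof silently works with $Y[i]^2$, writing $\mathcal{L}(Y[i]) = \mathcal{L}(\zeta_1^2/\sum_k \zeta_k^2)$, and then splits on $\{\zeta_1^2 \leq n^{-2c}\} \cup \{\sum_k \zeta_k^2 \geq n^c\}$.

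There is one real slip in your first argument. You claim the finitely many small $n$ not covered by the asymptotic step are handled because ``$2n^{-c+1} \geq 1$ on that range''. That is false: for any $c > 2$ one has $2n^{1-c} < 1$ already at $n = 2$, and the threshold at which $e^{-\alpha n}$ drops below $(2-\sqrt{2})n^{1-c}$ depends on $c$, so there is no uniform trivial fallback. The paper avoids this by choosing the cut differently: it splits on $\{\sum_k \zeta_k^2 \geq n^c\}$ rather than $\{\|Z\|_2^2 > 2n\}$ and bounds that term by Markov's inequality as $n/n^c = n^{1-c}$, valid for every $n$; the numerator term then becomes $\P[\zeta_1^2 \leq n^{-2c}] \leq n^{-c}$, also valid for every $n$. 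Your density-based alternative does give a uniform bound for $n \geq 3$ and is a clean fix (the formula needs separate treatment at $n=2$, where the marginal density is unbounded, but that case is trivial to handle directly).
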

\begin{proof}
Let $\zeta_{1},\ldots,\zeta_{n}$ be $n$ i.i.d. random variables with $\mathcal{N}(0,1)$ distribution. Recall that the law $\mathcal{L}(Y[i])$ of $Y[i]$ satisfies $\mathcal{L}(Y[i]) = \mathcal{L}(\frac{\zeta_{1}^{2}}{\sum_{k=1}^{n} \zeta_{k}^{2}})$. By Markov's inequality and some direct computation,
\be \label{IneqContRep}
\P[Y[i] < n^{-3c}] &= \P[\frac{\zeta_{1}^{2}}{\sum_{k=1}^{n} \zeta_{k}^{2}} < n^{-3c}] \\
&\leq \P[\zeta_{1}^{2} \leq n^{-2c}] + \P[\sum_{k=1}^{n} \zeta_{k}^{2} \geq n^{c}]\\
& \leq  n^{-c} + n^{-c+1} \leq 2 n^{-c+1}
\ee   
and the proof is finished.
\end{proof}

\section{Non-Markovian Coupling and Coalesence Estimates} \label{SecColl}
In this section, we construct a more complicated non-Markovian coupling of two chains $\{X_{t}\}_{t \geq 0}$, $\{Y_{t} \}_{t \geq 0}$ started at a pair of near by points. In the proof of Theorem \ref{MainThm}, we will  show that this coupling leads to a collision of $X_t, Y_t$ with high probability for sufficiently large $t$. In order to couple the random walks $\{X_{t}\}_{t \geq 0}$, $\{Y_{t} \}_{t \geq 0}$, it is enough to couple the \textit{update sequences} $\{ (i_{t}(x), j_{t}(x), \theta_{t}(x)) \}_{t \geq 0}$, $\{ (i_{t}(y), j_{t}(y), \theta_{t}(y)) \}_{t \geq 0}$ used to update them in representation \eqref{KacStepRep}. Our coupling is as follows:

\subsection{Non-Markovian Coupling}\label{DefCoupCon}

Fix $T_{0} \leq T \in \mathbb{N}$. We construct the coupled update sequences 
$\{ (i_{t}(x), j_{t}(x), \theta_{t}(x)) \}$ and  $\{ (i_{t}(y), j_{t}(y), \theta_{t}(y)) \}$ for ${T_{0} \leq t < T}$ via the following procedure:

\begin{enumerate}
\item For each $T_{0} \leq t < T$, choose $1 \leq i_{t}(x) < j_{t}(x) \leq n$ uniformly at random and set $i_{t}(y) = i_{t}(x)$, $j_{t}(y) = j_{t}(x)$. 
\item Define a sequence of partitions $\{ \mathcal{P}_{t} \}_{t=T_{0}}^{T}$ of $\{1,2,\ldots,n\}$ inductively by the process: 
\begin{itemize} \label{DefProcPartition}
\item Set the partition $\mathcal{P}_{T} = \{ \{1\}, \{2\},\ldots,\{n\} \}$. 
\item Write $\mathcal{P}_{t+1} = \{P_{1}(t+1),\ldots,P_{\ell_{t+1}}(t+1) \}$ with $P_k(t+1) \subset \{1,2,\cdots, n\}$. Let $1 \leq u(t), v(t) \leq \ell_{t+1}$ be the indices satisfying $i_{t}(x) \in P_{u(t)}(t+1)$, $j_{t}(x) \in P_{v(t)}(t+1)$. If $u(t)=v(t)$, set $\mathcal{P}_{t} = \mathcal{P}_{t+1}$. If $u(t) \neq v(t)$, construct $\mathcal{P}_{t}$ by merging the sets $P_{u(t)}(t+1), P_{v(t)}(t+1)$. Thus, if $u(t) < v(t)$, set $\mathcal{P}_{t} = \{ P_{1}(t+1),\ldots,P_{u(t)-1}(t+1),P_{u(t)+1}(t+1),\ldots,P_{v(t)-1}(t+1),P_{v(t)+1}(t+1),\ldots,P_{\ell_{t+1}}(t+1), P_{u(t)}(t+1) \cup P_{v(t)}(t+1) \}$.
\end{itemize}
\item If $\mathcal{P}_{T_{0}} = \{1,2,\ldots,n\}$, continue the construction of the coupling by: 
\begin{itemize}
\item Define the ordered set 
\be 
\mathcal{S} = \{s_{1},\ldots, s_{n-1} \} \equiv \{T_{0} \leq t < T \, : \, \mathcal{P}_{t} \neq \mathcal{P}_{t+1} \}. 
\ee 
\item For $t \in \mathcal{S}^{c} \cap \{T_{0},T_{0}+1,\ldots,T-1\}$, choose the update variables $(\theta_{t}(x), \theta_{t}(y))$ according to the proportional coupling in Definition \ref{def:propcoup}.
\item Let $\Pi$ denote the set of all joint distributions on $[0,2\pi) \times [0,2\pi)$ with both marginal distributions uniformly distributed on $[0,2\pi)$. For $t = s_{k} \in \mathcal{S}$, choose \footnote{Note that there is generally not a unique coupling with this `maximal' property. In our analysis, we do not care which coupling with this property is used. A reader concerned about this choice can instead use a coupling of the form implicitly constructed in the proof of Lemma \ref{LemmaCoupSuccBound} at this step, without changing any of the bounds in the rest of the paper.} $(\theta_{s_{k}}(x), \theta_{s_{k}}(y)) \sim \pi \in \Pi$ so as to maximize the probability of the following events:
\be
\sum_{i \in P_{r}(t+1)} X_{t+1}[i]^{2} &= \sum_{i \in P_{r}(t+1)} Y_{t+1}[i]^{2}, \qquad 1 \leq r \leq \ell_{t+1}  \label{EqDefOfGoodChain} \\
X_{t+1}[k] Y_{t+1}[k] &\geq 0, \qquad k \in \{i_{t}(x), j_{t}(x) \}.\label{EqDefOfGoodChain2}
\ee  
\end{itemize}
\item If $\mathcal{P}_{T_{0}} \neq \{1, 2, \ldots,n\}$, continue the construction of the coupling by setting $\theta_{t}(y) = \theta_{t}(x)$ for all $T_{0} \leq t < T$.
\end{enumerate}

\begin{remark}
If \eqref{EqDefOfGoodChain} holds at time $T$, \eqref{EqDefOfGoodChain2} holds for all $T_{0} \leq t \leq T$, and $\mathcal{P}_{T_{0}} = \{1,2,\ldots,n\}$, then $X_{T} = Y_{T}$. Recall that forcing $X_{T}$ and $Y_{T}$ to coalesce is the goal of our coupling construction.
\end{remark}

\begin{remark}
By construction, we have that $\{ i_{t}(x), j_{t}(x) \}_{T_{0} \leq t < T}$, $\{ i_{t}(y), j_{t}(y) \}_{T_{0} \leq t < T}$ have the correct distributions. Going through the construction, it is easy to verify that the distribution of $\theta_{s}(x)$ (respectively $\theta_{s}(y)$) conditional on $\{ i_{t}(x), j_{t}(x) \}_{T_{0} \leq t < T}$ and $\{\theta_{t}(x)\}_{T_{0} \leq t < s}$ (respectively $\{ i_{t}(y), j_{t}(y) \}_{T_{0} \leq t < T}$ and $\{\theta_{t}(y)\}_{T_{0} \leq t < s}$) is uniform on $[0, 2\pi)$. Thus, Definition \ref{DefCoupCon} gives a valid coupling of two copies of Kac's walk.
\end{remark}

\subsection{Bounds for coupling}
The remainder of this section consists of some bounds that will allow us to prove that two chains $\{X_{t}\}_{T_{0} \leq t < T}$, $\{Y_{t}\}_{T_{0} \leq t < T}$ constructed according to the coupling described in Section \eqref{DefCoupCon} satisfy the conditions in 
Equations \eqref{EqDefOfGoodChain} and \eqref{EqDefOfGoodChain2} for all $T_{0} \leq t < T$ with high probability, as long as $T-T_{0}$ is sufficiently large and $\| X_{T_{0}} - Y_{T_{0}} \|$ is sufficiently small. 

We first bound the probability that the condition in step 3 of our coupling, $\mathcal{P}_{T_{0}} = \{1,2,\ldots,n\}$, fails to hold:

\begin{lemma} [Splitting of Partitions] \label{LemmaPartSplit}
Fix $\epsilon > 0$, $T_{0} \in \mathbb{N}$ and $T  > T_{0} + (\frac{1}{2} + 2 \epsilon) n \log(n)$. Then for $\mathcal{P}_{T_{0}}$ as in 
Section \ref{DefCoupCon} and all $n > N_{0}(\epsilon)$ sufficiently large,
\be 
\P[\mathcal{P}_{T_{0}} = \{1,2,\dots,n\}] \geq 1 - 2 n^{-\epsilon}.
\ee 
\end{lemma}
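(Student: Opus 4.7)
The partition procedure in Section~\ref{DefCoupCon} is precisely the classical connected-components process of a random multigraph: starting from the singleton partition $\{\{1\},\ldots,\{n\}\}$ at time $T$, one inserts for each $T_0 \leq t < T$ the uniformly random edge $\{i_t(x),j_t(x)\} \in \binom{[n]}{2}$ and merges the two blocks containing its endpoints whenever they differ (the order of insertion is irrelevant for connectivity by exchangeability of the edges). Hence $\mathcal{P}_{T_0}$ equals the one-block partition $\{1,2,\ldots,n\}$ iff the random multigraph $G$ on $[n]$ with edge multiset $\{\{i_t(x),j_t(x)\}: T_0 \leq t < T\}$ is connected. Writing $m := T - T_0 > (\tfrac12 + 2\epsilon)\,n \log n$, the lemma reduces to showing $\P[G \text{ disconnected}] \leq 2n^{-\epsilon}$ for $n$ sufficiently large.

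I will apply the standard cut-based union bound: $G$ is disconnected iff some proper nonempty $S \subsetneq [n]$ receives no edges from $S^c$, and by symmetry we may restrict to $|S| = k \leq n/2$. A uniform edge fails to cross a fixed cut of size $k$ with probability $1 - \tfrac{2k(n-k)}{n(n-1)}$, so
\[
\P[G \text{ disconnected}] \;\leq\; \sum_{k=1}^{\lfloor n/2 \rfloor} \binom{n}{k}\, \Bigl(1 - \tfrac{2k(n-k)}{n(n-1)}\Bigr)^m.
\]
I will split this sum at $k_\ast := \lfloor \sqrt{n} \rfloor$. For $1 \leq k \leq k_\ast$ I will use the sharp estimate $\tfrac{2k(n-k)}{n(n-1)} \geq (1-o(1))\cdot\tfrac{2k}{n}$ (valid since $k = o(n)$) and $\binom{n}{k} \leq n^k/k!$, so the $k$-th summand is bounded by $n^{-4\epsilon k (1-o(1))}/k!$; the resulting geometric series is dominated by its $k=1$ term and contributes $O(n^{-4\epsilon})$. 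For $k_\ast < k \leq n/2$ I will fall back on the cruder $\tfrac{2k(n-k)}{n(n-1)} \geq \tfrac{k}{n-1}$ together with $\binom{n}{k} \leq (en/k)^k$; setting $\alpha = \tfrac12 + 2\epsilon$, the $k$-th summand is then bounded by $(e\,n^{1-\alpha}/k)^k = (e\,n^{1/2 - 2\epsilon}/k)^k \leq (e\,n^{-2\epsilon})^k$, which is super-polynomially small in $n$ once $k \geq \sqrt{n}$. Summing both ranges yields $\P[G \text{ disconnected}] = O(n^{-4\epsilon}) \leq 2n^{-\epsilon}$ for $n$ large.

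The only real obstacle is the medium-$k$ range. A single crude bound such as $\tfrac{2k(n-k)}{n(n-1)} \geq k/n$ applied uniformly in $k$ would lose the factor of $2$ in the isolated-vertex regime: the $k=1$ summand alone would become $n \cdot n^{-(1/2 + 2\epsilon)} = n^{1/2 - 2\epsilon} \gg 1$. One must therefore keep the sharp $\sim 2k/n$ asymptotic precisely in the small-$k$ regime where $\binom{n}{k}$ is dangerous, and revert to the cruder estimate only once $k$ is large enough for $(en/k)^k$ to be crushed by the exponential factor. This is exactly the classical Erd\H{o}s--R\'enyi connectivity-threshold argument, adapted to edges sampled with replacement; given the split above, the remaining work is routine bookkeeping.
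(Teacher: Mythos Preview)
Your reduction to connectivity of the random multigraph on $[n]$ with $m=T-T_0$ i.i.d.\ uniform edges is exactly what the paper does; the paper, however, stops there and simply invokes Proposition~7.3 of Bollob\'as~\cite{Boll01} for the bound $\P[G_{T-T_0}\text{ disconnected}]\leq 2n^{-\epsilon}$. Your argument instead reproves this threshold from scratch via the cut-based union bound, splitting at $k_\ast=\lfloor\sqrt{n}\rfloor$ to keep the sharp $2k/n$ crossing probability in the small-$k$ regime (where the isolated-vertex term governs and one cannot afford to lose the factor~$2$) and falling back on $k/(n-1)$ once $k$ is large enough for $(en/k)^k$ to be harmless. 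The bookkeeping is correct and yields $O(n^{-4\epsilon(1-o(1))})\leq 2n^{-\epsilon}$ for large $n$. The trade-off is clear: the paper's proof is a two-line citation, while yours is self-contained and makes the dependence on the edge count explicit (and in particular handles the with-replacement sampling directly rather than relying on the reader to reconcile models).
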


\begin{proof}
This follows immediately from Proposition 7.3 of \cite{Boll01}. Fix $M \geq 0$ and define the Erdos-Renyi graph $G_M$ on n vertices with exactly $M$ (possibly repeated) edges 
\be
E(G_m) = \cup_{s = T - M}^{T-1}(i_s,j_s).
\ee
Then
\be
\P[\mathcal{P}_{T_{0}} = \{1,2,\dots,n\}] = \P[G_{T-T_{0}} \text{ is connected}] \geq 1 - 2 n^{-\epsilon}
\ee 
where the last inequality follows from Proposition 7.3 of \cite{Boll01}.
\end{proof}

For $T_{0} \leq s < T$, define the event $\mathcal{A}(s)$ by 
\be \label{EqGoodCouplingSet}
\mathcal{A}(s) = \{ \text{Equations \eqref{EqDefOfGoodChain} and \eqref{EqDefOfGoodChain2} are satisfied for all } T_{0} \leq t \leq s. \}. 
\ee 
For $S \subset \{1,2,\dots,n\}$ and $X \in \mathbb{R}^{n}$, define 
\be 
\| X \|_{1,S} = \sum_{i \in S} | X[i] |.
\ee 
Also recall the definition of $A_{t}, B_{t}$ in Equation \eqref{EqABdef}. The following bound implies that good couplings will never drift too far apart:
\begin{lemma} [Closeness of Good Couplings] \label{LemmaCloseness}
Fix $T_{0} < T$, and couple two chains $\{X_{t}\}_{T_0 \leq t < T}$, $\{Y_{t}\}_{T_0 \leq t < T}$ according to the non-Markovian coupling defined in Section \ref{DefCoupCon}. Fix $T_0 \leq s \leq T$. Then, on the event $\mathcal{A}(s) \cap \{ \mathcal{P}_{T_0} = \{1, 2,\cdots, n\} \}$, we have 
\be \label{EqClosenessGoodCoupConc}
\| A_{t} - B_{t} \|_{1,S} \leq \| A_{T_0} - B_{T_0} \|_{1}
\ee 
for all $T_0 \leq t \leq s$ and all $S \in \mathcal{P}_{t}$. Furthermore, for
 all $T_0 \leq t \leq s$,
\be \label{eqn:AtBt1norm}
\|A_t - B_t\|_1 \leq n \|A_{T_0} - B_{T_0}\|_1. 
\ee
\end{lemma}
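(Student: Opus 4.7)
The plan is to prove by induction on $t \in \{T_0, T_0+1, \ldots, s\}$ the following strengthening of \eqref{EqClosenessGoodCoupConc}: on the event $\mathcal{A}(s) \cap \{\mathcal{P}_{T_0} = \{1,2,\ldots,n\}\}$, writing $\Delta_k(t) = A_t[k] - B_t[k]$, every block $S \in \mathcal{P}_t$ satisfies both
\begin{equation*}
\mathrm{(a)} \ \sum_{k \in S} \Delta_k(t) = 0, \qquad \mathrm{(b)} \ \|A_t - B_t\|_{1,S} \leq \|A_{T_0} - B_{T_0}\|_1.
\end{equation*}
The base case $t = T_0$ is immediate: $\mathcal{P}_{T_0}$ is the trivial partition, $X_{T_0}, Y_{T_0} \in \sn$ forces $\sum_k A_{T_0}[k] = \sum_k B_{T_0}[k] = 1$, and (b) is a tautology for the unique block. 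Once (a) and (b) are established, \eqref{eqn:AtBt1norm} follows by summing (b) over $S \in \mathcal{P}_t$ and using $|\mathcal{P}_t| \leq n$.

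For the inductive step $t \to t+1$ I split on whether $t \in \mathcal{S}$. If $t \notin \mathcal{S}$ the proportional coupling governs the step and $\mathcal{P}_{t+1} = \mathcal{P}_t$. For $S \in \mathcal{P}_{t+1}$ with $\{i_t, j_t\} \cap S = \emptyset$ the block is untouched, so (a), (b) are inherited; otherwise $\{i_t, j_t\} \subset S$, and the Kac step's energy conservation gives $A_{t+1}[i_t] + A_{t+1}[j_t] = A_t[i_t] + A_t[j_t]$ (and likewise for $B$), preserving (a). For (b), the sign-alignment noted in the remark following Definition \ref{def:propcoup} forces $\Delta_{i_t}(t+1)$ and $\Delta_{j_t}(t+1)$ to share a sign, so
\begin{equation*}
|\Delta_{i_t}(t+1)| + |\Delta_{j_t}(t+1)| = |\Delta_{i_t}(t) + \Delta_{j_t}(t)| \leq |\Delta_{i_t}(t)| + |\Delta_{j_t}(t)|,
\end{equation*}
whence $\|A_{t+1} - B_{t+1}\|_{1,S} \leq \|A_t - B_t\|_{1,S}$ and the inductive bound carries over.

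The substantive case is $t \in \mathcal{S}$, where the non-Markovian coupling refines some block $M \in \mathcal{P}_t$ into two blocks $S_1, S_2 \in \mathcal{P}_{t+1}$ with $i_t \in S_1$ and $j_t \in S_2$. For $S \in \mathcal{P}_{t+1}$ distinct from $S_1, S_2$ the block is untouched and (a), (b) pass through. For $S = S_1$ (the case $S = S_2$ is symmetric), property (a) is precisely \eqref{EqDefOfGoodChain} at time $t+1$, hence part of $\mathcal{A}(s)$. The main obstacle lies in (b): the naive bound extracted from (a) at $t+1$ reads $|A_{t+1}[i_t] - B_{t+1}[i_t]| \leq \sum_{k \in S_1 \setminus \{i_t\}} |\Delta_k(t)|$, which only yields $\|A_{t+1} - B_{t+1}\|_{1,S_1} \leq 2\|A_t - B_t\|_{1,S_1}$, and a factor of $2$ compounded over the $n-1$ refinement events would be fatal. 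The escape is to feed in (a) at time $t$ on the parent block $M$. Combining (a) at $t+1$ on $S_1$ with the identity $\Delta_k(t+1) = \Delta_k(t)$ for $k \in S_1 \setminus \{i_t\}$ gives
\begin{equation*}
A_{t+1}[i_t] - B_{t+1}[i_t] = -\sum_{k \in S_1 \setminus \{i_t\}} \Delta_k(t),
\end{equation*}
and substituting $\sum_{k \in M} \Delta_k(t) = 0$ from (a) at $t$ on $M = S_1 \cup S_2$ rewrites the right-hand side as $\Delta_{i_t}(t) + \sum_{k \in S_2} \Delta_k(t)$, so that
\begin{equation*}
|A_{t+1}[i_t] - B_{t+1}[i_t]| \leq |\Delta_{i_t}(t)| + \|A_t - B_t\|_{1,S_2}.
\end{equation*}
Adding the unchanged contributions $\sum_{k \in S_1 \setminus \{i_t\}} |\Delta_k(t)|$ telescopes to
\begin{equation*}
\|A_{t+1} - B_{t+1}\|_{1,S_1} \leq \|A_t - B_t\|_{1,S_1} + \|A_t - B_t\|_{1,S_2} = \|A_t - B_t\|_{1,M},
\end{equation*}
and (b) at time $t$ applied to $M \in \mathcal{P}_t$ delivers the required bound, completing the induction.
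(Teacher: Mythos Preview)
Your proof is correct and follows the same inductive skeleton as the paper's: induct on $t$, split on $t\in\mathcal{S}$ versus $t\notin\mathcal{S}$, show each child block's $\|\cdot\|_{1,S}$ is controlled by its parent block, and then sum over the at most $n$ blocks to get \eqref{eqn:AtBt1norm}. The one real difference is how the $t\in\mathcal{S}$ step is handled: the paper manipulates the explicit $\cos^{2}(\varphi)$ form of the update to bound $|A_{t+1}[i_t]-B_{t+1}[i_t]|$, while you carry the block zero-sum property (a) as an explicit invariant and combine it at time $t$ on the parent $M$ and at time $t+1$ on the child $S_{1}$ to obtain $\|A_{t+1}-B_{t+1}\|_{1,S_{1}}\le\|A_{t}-B_{t}\|_{1,M}$ with no angle bookkeeping. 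Your route is arguably cleaner, and it makes transparent exactly where the event $\mathcal{A}(s)$ (via \eqref{EqDefOfGoodChain}) enters.

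One minor quibble: in the $t\notin\mathcal{S}$ case, the sign alignment you invoke---that $\Delta_{i_t}(t+1)$ and $\Delta_{j_t}(t+1)$ share a sign---is not literally what the remark after Definition~\ref{def:propcoup} asserts (that remark gives $X_{t+1}[i_t]Y_{t+1}[i_t]\ge 0$). The property you need follows instead from the proportional-coupling identities $\Delta_{i_t}(t+1)=(\Delta_{i_t}(t)+\Delta_{j_t}(t))\cos^{2}\varphi$ and $\Delta_{j_t}(t+1)=(\Delta_{i_t}(t)+\Delta_{j_t}(t))\sin^{2}\varphi$, which make both terms nonnegative multiples of the same quantity.
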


\begin{remark}
The final bound of Lemma \ref{LemmaCloseness}, Inequality \eqref{eqn:AtBt1norm}, is quite weak: it allows the distance $\|A_{t} - B_{t}\|_{1}$ to increase by as much as a factor of $n$. However, this additional factor of $n$ has a very small impact on our final bound in Theorem \ref{MainThm}; indeed, changing this \textit{multiplicative} factor of $n$ to a factor of $n^{k}$ would change our bound on the mixing time only by an \textit{additive} factor of $2 k n \log(n)$.

We also point out that, a priori, it is not obvious that \textit{any} polynomial bound holds. Indeed, the distance between $A_{t}$ and $B_{t}$ can double over a single step. Thus, naively, one obtains only a much larger bound, on the order of $n^{n}$.
\end{remark}

\begin{proof} [Proof of Lemma \ref{LemmaCloseness}]
We prove \eqref{EqClosenessGoodCoupConc} by induction on $t$. It is trivial at time $t = T_{0}$. Now assume that it holds at time $t$. We will show it also holds at time $t+1$ by considering two cases seperately:
\begin{enumerate}
\item \textbf{$t \notin \mathcal{S}$:} In this case, $\ell_{t} = \ell_{t+1}$ and $P_{k}(t) = P_{k}(t+1)$ for all $1 \leq k \leq \ell_{t}$. For all $k \neq u(t)$, it is immediate that $A_{t+1}[m] = A_{t}[m]$ and $B_{t+1}[m] = B_{t}[m]$ for all $m \in P_{k}(t)$, so by the induction hypothesis
\be 
\| A_{t+1} - B_{t+1}\|_{1,P_{k}(t+1)} = \| A_{t} - B_{t}\|_{1,P_{k}(t)} \leq \| A_{T_0} - B_{T_0} \|_{1}.
\ee 
For the remaining index $k = u(t)$, we calculate 
\be 
\| A_{t+1} - &B_{t+1} \|_{1,P_{u(t)}(t+1)} \\
&= \sum_{m \in P_{u(t)}(t+1)} | X_{t+1}[m]^{2} - Y_{t+1}[m]^{2} | \\
&= \sum_{m \in P_{u(t)}(t) \setminus \{ i_{t}, j_{t} \} } | X_{t}[m]^{2} - Y_{t}[m]^{2} | + | X_{t+1}[i_{t}]^{2} - Y_{t+1}[i_{t}]^{2} | + | X_{t+1}[j_{t}]^{2} - Y_{t+1}[j_{t}]^{2} | \\
&= \sum_{m \in P_{u(t)}(t) \setminus \{ i_{t}, j_{t} \} } | X_{t}[m]^{2} - Y_{t}[m]^{2} | + | (X_{t}[i_{t}]^{2} + X_{t}[j_{t}]^{2}) \cos(\varphi)^{2} - (Y_{t}[i_{t}]^{2} + Y_{t}[j_{t}]^{2}) \cos(\varphi)^{2} | \\
&\hspace{3cm}+ | (X_{t}[i_{t}]^{2} + X_{t}[j_{t}]^{2}) \sin(\varphi)^{2} - (Y_{t}[i_{t}]^{2} + Y_{t}[j_{t}]^{2}) \sin(\varphi)^{2} | \\
&= \sum_{m \in P_{u(t)}(t) \setminus \{ i_{t}, j_{t} \} } | X_{t}[m]^{2} - Y_{t}[m]^{2} | + | (X_{t}[i_{t}]^{2} + X_{t}[j_{t}]^{2})  - (Y_{t}[i_{t}]^{2} + Y_{t}[j_{t}]^{2})  |  \\
&\leq \sum_{m \in P_{u(t)}(t) \setminus \{ i_{t}, j_{t} \} } | X_{t}[m]^{2} - Y_{t}[m]^{2} | + | X_{t}[i_{t}]^{2}   - Y_{t}[i_{t}]^{2}   | + | X_{t}[j_{t}]^{2} -   Y_{t}[j_{t}]^{2} |\\
& = \| A_{t} - B_{t} \|_{1,P_{u(t)}(t)} \leq \| A_{T_0} - B_{T_0} \|_{1},
\ee     
where the last step is by the induction hypothesis. This completes the proof for the case $t \notin \mathcal{S}$.
\item \textbf{$t = s_{k} \in \mathcal{S}$:} By the same argument as in the previous case, we need only check that $\| A_{t+1} - B_{t+1}\|_{1,P_{u(t)}(t+1)}  \leq \|  A_{t} - B_{t}\|_{1,P_{u(t)}(t+1) \cup P_{v(t)}(t+1) }$ and $\| A_{t+1} - B_{t+1}\|_{1,P_{v(t)}(t+1)}  \leq \|  A_{t} - B_{t}\|_{1,P_{u(t)}(t+1) \cup P_{v(t)}(t+1) }$; the other $k-1$ expressions in equality \eqref{EqDefOfGoodChain} are satisfied automatically. By the symmetry of the problem, it is sufficient to check the first of these two inequalities. We compute 
\be 
\| A_{t+1} - B_{t+1}\|_{1,P_{u(t)}(t+1)} &= \sum_{m \in P_{u(t)}(t+1)} | A_{t+1}[m] - B_{t+1}[m] | \\
&= \sum_{m \in P_{u(t)}(t+1) \setminus \{ i_{t} \} } | A_{t+1}[m] - B_{t+1}[m] | + | A_{t+1}[i_{t}] - B_{t+1}[i_{t}] | \\
&= \sum_{m \in P_{u(t)}(t+1) \setminus \{ i_{t} \} } | A_{t}[m] - B_{t}[m] | + | A_{t}[i_{t}] + A_{t}[j_{t}]  - B_{t+1}[i_{t}] -  B_{t+1}[j_{t}]| \cos(\varphi)^{2} \\
&\leq \sum_{m \in P_{u(t)}(t+1) \cup \{ j_{t} \} } | A_{t}[m] - B_{t}[m] | \leq \|  A_{t} - B_{t}\|_{1,P_{u(t)}(t+1) \cup P_{v(t)}(t+1) } \\
&\leq \| A_{T_0} - B_{T_0} \|_{1},
\ee     
where the last step is by the induction hypothesis. 
\end{enumerate}
Thus, in either case, Equation \eqref{EqClosenessGoodCoupConc}  holds at time $t+1$, proving our first claim.\par Equation \eqref{eqn:AtBt1norm} follows by calculating
\be 
\| A_t - B_t \|_{1} &= \sum_{k=1}^{\ell_{t}} \| A_{t} - B_{t} \|_{1, P_{k}(t)} \\
&\leq  \sum_{k=1}^{\ell_{t}} \| A_{T_{0}} - B_{T_{0}} \|_{1} \\
&= \ell_{t} \| A_{T_{0}} - B_{T_{0}} \|_{1} \leq  n \| A_{T_{0}} - B_{T_{0}} \|_{1},
\ee 
where the first inequality follows from Equation \eqref{EqClosenessGoodCoupConc}, and we are done.
\end{proof}

The following Lemma will be used to show that it is possible to couple $X_{t+1},Y_{t+1}$ so as to satisfy Equations \eqref{EqDefOfGoodChain} and  \eqref{EqDefOfGoodChain2}, as long as $X_{t},Y_{t}$ satisfy those equations and have certain other good properties. 

\begin{lemma} [Coupling Success] \label{LemmaCoupSuccBound}
Fix positive reals $1 < p < q' < \frac{q}{2}$. Let $\theta, \theta' \sim \mathrm{Unif}[0,2\pi)$ and let $S = A + B \cos(\theta)^{2}$ and $S' = C + D \cos(\theta)^{2}$ for some $0 \leq A,B,C,D \leq 1$ that satisfy $|A-C|, |B-D| \leq n^{-q}$ and $ B,D \geq n^{-p}$. Then for $n > N_{0}(p,q, q')$ sufficiently large, there exists a coupling of $\theta, \theta'$ so that 
\be \label{IneqCoupSuccProb}
\P[S = S'] \geq 1 - 6 \times 10^{3} n^{- c}
\ee 
and
\be \label{IneqCoupSuccProb2}
\cos(\theta) \cos(\theta') \geq 0, \qquad \sin(\theta) \sin(\theta') \geq 0,
\ee 
where $c = \min(\frac{q'}{2}, q -2 q') > 0$.
\end{lemma}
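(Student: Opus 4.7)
The plan is to construct the required coupling in two stages. First, to handle the sign conditions \eqref{IneqCoupSuccProb2}, I would choose a quadrant $Q$ uniformly at random from the four quadrants of $[0,2\pi)$ and condition both $\theta$ and $\theta'$ to lie in $Q$. This preserves the uniform marginals of $\theta,\theta'$ on $[0,2\pi)$ and forces $\cos(\theta)\cos(\theta')\ge 0$ and $\sin(\theta)\sin(\theta')\ge 0$ deterministically. Inside a fixed quadrant the change of variables $U=\cos^{2}(\theta)$ is a monotone bijection onto $[0,1]$ pushing Lebesgue measure forward to the arcsine distribution with density $p_{U}(u)=\tfrac{1}{\pi\sqrt{u(1-u)}}$; thus the problem reduces to coupling two arcsine variables $U,U'$ so that $S=A+BU$ equals $S'=C+DU'$ with high probability. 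I would then invoke the maximal (total variation) coupling, which achieves $\mathbb{P}(S=S')=1-\tv{\mathcal{L}(S)-\mathcal{L}(S')}$, reducing the lemma to an upper bound on this total variation distance.

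To estimate $\tv{\mathcal{L}(S)-\mathcal{L}(S')}$, I would split the real line at scale $\eta:=n^{-q'}$ into a ``boundary'' region, namely the union of the four intervals of length $\eta$ around each of the endpoints $A,A+B,C,C+D$, and a ``bulk'' region. On the boundary the arcsine density is integrable but singular, and the explicit computation $\int_{A}^{A+\eta}p_{S}(s)\,ds=\tfrac{2}{\pi}\arcsin\sqrt{\eta/B}\le\sqrt{\eta/B}$, together with $B,D\ge n^{-p}$, bounds the total boundary mass by a constant multiple of $\sqrt{n^{-q'+p}}=n^{-q'/2}\cdot n^{p/2}$; since $p$ is fixed, the $n^{p/2}$ factor is absorbed into the implicit constant. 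On the bulk, write $p_{S}(s)=g(s-A,A+B-s)$ and $p_{S'}(s)=g(s-C,C+D-s)$ with $g(a,b)=\tfrac{1}{\pi\sqrt{ab}}$. The bulk lower bounds $a,b\ge\eta$ imply $|\partial_{a}g|,|\partial_{b}g|\le\tfrac{1}{2\pi\eta^{2}}$, and then $|A-C|\le n^{-q}$ and $|(A+B)-(C+D)|\le 2n^{-q}$ yield the pointwise bound $|p_{S}-p_{S'}|\le\tfrac{3n^{-q}}{2\pi\eta^{2}}$. Integrating over an interval of length at most $1$ gives a bulk contribution of order $n^{-q+2q'}$. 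Summing the two contributions gives $\tv{\mathcal{L}(S)-\mathcal{L}(S')}=O(n^{-q'/2})+O(n^{-(q-2q')})=O(n^{-c})$ with $c=\min(q'/2,q-2q')$, matching the statement.

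The main obstacle is the balance between the two error sources: smaller $\eta$ enlarges the bulk (tightening the boundary bound) but shrinks the cutoff scale of the derivatives $|\partial_{a,b}g|$ (loosening the bulk bound), and conversely. The choice $\eta=n^{-q'}$ is precisely what equalises the two exponents $q'/2$ and $q-2q'$ up to the factor of $2$ in the bulk derivative estimate, yielding the stated exponent $c$. A secondary technical nuisance is that the arcsine density is infinite at the endpoints, so the boundary mass must be controlled via the arcsine CDF (or the substitution $u=(s-A)/B$) rather than via a pointwise supremum of the density. Tracking the explicit absolute constant $6\times 10^{3}$ is then bookkeeping: four boundary contributions (one at each endpoint) for each of the two densities, plus one bulk contribution, each with small explicit prefactors, once the fixed factor $n^{p/2}$ is absorbed into the choice of $N_{0}(p,q,q')$.
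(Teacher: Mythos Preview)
Your approach mirrors the paper's: both reduce to bounding $\|\mathcal L(S)-\mathcal L(S')\|_{\TV}$, split the arcsine-type density into a bulk region (at distance $n^{-q'}$ from the endpoints) and a boundary region, control the bulk by a pointwise density comparison and the boundary by the integrable arcsine singularity, and handle the sign constraints \eqref{IneqCoupSuccProb2} via the independence of $\cos^{2}\theta$ from the pair of signs $(\mathrm{sgn}\cos\theta,\mathrm{sgn}\sin\theta)$. The only cosmetic differences are the order (you fix signs first, the paper does it last) and that the paper bounds the bulk via a ratio estimate inside $\int\min(f_S,g_{S'})$ rather than a Lipschitz/derivative bound on $|f_S-g_{S'}|$; these are equivalent in spirit.

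There is, however, a genuine slip in your boundary estimate. You correctly obtain $\int_{A}^{A+\eta}p_{S}\le \sqrt{\eta/B}\le n^{(p-q')/2}$, then rewrite this as $n^{-q'/2}\cdot n^{p/2}$ and assert that ``since $p$ is fixed, the $n^{p/2}$ factor is absorbed into the implicit constant.'' That is false: $p>1$ is a fixed \emph{exponent}, so $n^{p/2}\to\infty$ and cannot be absorbed into any $O(1)$ constant. Your boundary contribution is therefore $n^{-(q'-p)/2}$, not $n^{-q'/2}$, and the argument as written yields only $c=\min\bigl((q'-p)/2,\,q-2q'\bigr)$, which is still positive (since $q'>p$) but strictly smaller than the stated $c=\min(q'/2,\,q-2q')$. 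The paper's computation records $\tfrac{2}{\pi}n^{-q'/2}$ for the boundary term without an explicit $B^{-1/2}$ factor; if you want to match the stated exponent exactly you must either sharpen this step (for instance, take the boundary width proportional to $B$ and re-balance the bulk estimate) or verify that the downstream application in Section~\ref{SecMainThm} only requires the weaker exponent $(q'-p)/2$ with the specific choices $p=b$, $q'=2(a-1)/5$ made there.
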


\begin{remark}
In the notation of the construction in Section \ref{DefCoupCon}, we will use Lemma \ref{LemmaCoupSuccBound} with $A = (\sum_{k \in P_{t} \backslash \{i_{t}\} } X_{t}[k]^{2} )$ and $B= (X_{t}[i_{t}]^{2} + X_{t}[j_{t}]^{2})$. Roughly speaking, the upper bounds on $|A-C|, |B-D|$ in the statement of the lemma represent upper bounds on the distances between the medians and densities of two distributions that we wish to couple, and the lower bounds on $B,D$ represent upper bounds on how quickly the densities change. By inspection of Inequality \eqref{IneqStandardMinorizationBound} below, we expect to be able to couple two random variables with high probability if the distance between their medians and densities is small compared to the derivative of the density.

Note that we need both bounds to obtain the desired coupling inequality; as illustrated by the densities $\{ \frac{n^{10}}{2\sqrt{2 \pi}} e^{-n^{10}(x-n^{-1})^{2}} \}_{n \in \mathbb{N}}$, $\{ \frac{n^{10}}{2\sqrt{2 \pi}} e^{-n^{10}(x+n^{-1})^{2}} \}_{n \in \mathbb{N}}$, it is not enough to just have a bound on the distance between the medians.
\end{remark}

\begin{proof} [Proof of Lemma \ref{LemmaCoupSuccBound}]
We begin by showing that it is possible to satisfy inequality \eqref{IneqCoupSuccProb}.  Recall the standard inequality that, for any distributions $\nu_{1}, \nu_{2}$ with densities $f_{1}, f_{2}$ on $\mathbb{R}$,
\be \label{IneqStandardMinorizationBound}
\| \nu_{1} - \nu_{2} \|_{\mathrm{TV}} \leq 1 - \int \min(f_{1}(x), f_{2}(x)) dx. 
\ee 
The random variables $S, S'$ have densities 
\be  
f_{S}(x) &= \frac{1}{2 \pi} \frac{1}{\sqrt{B^{2} - 4(x-A-B/2)^{2}}}, \qquad \qquad x \in (A, A+B) \\
g_{S'}(x) &= \frac{1}{2 \pi} \frac{1}{\sqrt{D^{2} - 4(x-C-D/2)^{2}}}, \qquad \qquad x \in (C, C+D). \\
\ee 
Thus, applying Inequality \eqref{IneqStandardMinorizationBound} to these two densities with $1 < p < q' < \frac{q}{2}$ and $n > N_{0}(p,q,q')$ sufficiently large,
\be 
\|& S - S' \|_{\TV} \\
&= 1 - \int_{(A,A+B) \cap (C, C+D) } \min(f_{S}(x), g_{S'}(x)) dx \\
&\leq 1 - \frac{1}{2 \pi} \int_{ (A + n^{-q'},\,A+B -2 n^{-q'})  }\frac{1}{\sqrt{B^{2} - 4(x-A-B/2)^{2}}} \times \\
&\hspace{4cm} \min \big( 1 , { \sqrt{(D- n^{-q})^{2} - 4(|x-C-D/2| + 2n^{-q})^{2}} \over \sqrt{D^{2} - 4(x-C-D/2)^{2}}} \big)\, dx \\
&\leq 1 - \frac{1}{2 \pi} \int_{(A + n^{-q'},\,A+B -2 n^{-q'})} \frac{1}{\sqrt{B^{2} - 4(x-A-B/2)^{2}}} \times \\
 &\hspace{4cm} (1- 100 {n^{-q} \over D^2-4(x-C-D/2)^2})^{1 \over 2}  dx \\
&\leq 1 - (1- 8\times10^3 {n^{-q} \over n^{-2q'}})^{{1\over 2}} \frac{1}{2 \pi} \int_{ (A + n^{-q'},\,A+B -2 n^{-q'})} \frac{1}{\sqrt{B^{2} - 4(x-A-B/2)^{2}}} dx \\
&\leq  1 - (1- 8\times10^3 {n^{-q} \over n^{-2q'}})^{1 \over 2} (1-\frac{1}{2 \pi} \int_{(A ,\,A+2 n^{-q'}) \cup (A+B-2n^{-q'}, A+B)} \frac{1}{\sqrt{B^{2} - 4(x-A-B/2)^{2}}} dx ) \\
&\leq 1 - (1- 8\times10^3 {n^{-q} \over n^{-2q'}})^{1 \over 2}(1- {2 \over \pi} n^{-{1 \over 2} q'}) \\
&\leq 5 \times 10^3 n^{2q'-q}  + {2 \over \pi} n^{- {1 \over 2} q'}.
\ee 

This implicitly defines a coupling of $S,S'$ that satisfies $\P[S=S'] \geq 1 - 5 \times 10^3 n^{2q'-q}  + {2 \over \pi} n^{- {1 \over 2} q'} \geq 1 - 6 \times 10^3 \times n^{-c} $ for all sufficiently large $n$.\par 
Once we have a coupling of $S,S'$ satisfying \eqref{IneqCoupSuccProb}, we can extend it to a coupling of $\theta,\theta'$ that satisfies \eqref{IneqCoupSuccProb2} via a suitable rotation. To be more precise, define the map $G:[0,2\pi) \mapsto [0,1] \times \{-1,1\} \times \{-1,1\}$ by
\be 
G(\psi) \equiv (G_{1}(\psi), G_{2}(\psi), G_{3}(\psi)) = (\cos^{2}(\phi), \frac{\cos(\phi)}{|\cos(\phi)|}, \frac{\sin(\phi)}{|\sin(\phi)|}). 
\ee 
If $\theta \sim \mathrm{Unif}[0,2\pi)$, then $G_{1}(\theta)$, $G_{2}(\theta)$ and $G_{3}(\theta)$ are independent, with $\P[G_{2}(\theta) = 1] = \P[G_{3}(\theta) = 1] = \frac{1}{2}$. Furthermore, $S$ depends only on $G_{1}(\theta)$ and $S'$ depends only on $G_{1}(\theta')$. Thus, when extending a coupling of $S,S'$ to a coupling of $\theta, \theta'$ we are free to choose any coupling of $G_{2}(\theta), G_{2}(\theta')$ and any coupling of $G_{3}(\theta), G_{3}(\theta')$. We choose the coupling for which
\be 
G_{i}(\theta) = G_{i}(\theta')
\ee 
for $i \in \{2,3\}$. Thus, our coupling of $\theta,\theta'$ automatically satisfies both inequality \eqref{IneqCoupSuccProb} and \eqref{IneqCoupSuccProb2} whenever $S,S'$ satisfy inequality \eqref{IneqCoupSuccProb}.
\end{proof} 

\section{Proof of Theorem \ref{MainThm}} \label{SecMainThm}
We begin by proving the lower bound of Theorem \ref{MainThm}, 
Equation \eqref{EqMainThmLowerBound}. Fix $C_1 < 1/2$ and let $\{ T_1(n) \}_{n \geq 2}$ satisfy $T_{1}(n) < C_{1} n \log(n)$. Fix $X_{0} = x \in \sn$, let $\{ (i_{t},j_{t},\theta_{t} ) \}_{t \geq 0}$ be the update sequence associated with $\{X_{t} \}_{t \geq 0}$ and define
\be 
\zeta = \min \{t > 0 \, : \, \cup_{0 \leq s < t} \{ i_{s}, j_{s} \} = \{1,2,\ldots,n\} \}.
\ee 
For $t < \zeta$, $X_{t} \in \mathcal{A} \equiv \cup_{i=1}^{n} \{ X \in \sn \, : \, X[i] = x[i] \}$, and $\mu(\mathcal{A}) = 0$. Thus,
\be 
\| \mathcal{L}(X_{T_1(n)}) - \mu \|_{\TV} &\geq \P[X_{T(n)} \in \mathcal{A}] - \mu(\mathcal{A}) \\
&\geq \P[\zeta > T_1(n)] - 0. \label{EqLowerBoundCalc1}
\ee 
By the standard coupon collector problem (see, \textit{e.g.}, \cite{ErRe61}),
\be \label{EqLowerBoundCalc2}
\lim_{n \rightarrow \infty} \P[\zeta > T_1(n)] = 1.
\ee 
Combining inequality \eqref{EqLowerBoundCalc1} with \eqref{EqLowerBoundCalc2} completes the proof of Equation \eqref{EqMainThmLowerBound}.

Let $a = 47$, $b= 18.1$. To prove inequality \eqref{EqMainThmUpperBound}, we fix sequences $T_2(n)$, $T_{2}'(n)$ and $T_{2}''(n)$ that satisfy 
\be
200 n \log(n) < T_{2}(n)< n^2,
\ee
and  $T'_{2}(n) \geq (4a + 5) n \log(n)$, $T''_{2}(n) \geq  n \log(n)$ with  
\be
T'_{2}(n) + T''_{2}(n) = T_{2}(n)
\ee for all $n$ sufficiently large. We then construct a coupling of two copies $\{X_{t} \}_{t \geq 0}$, $\{Y_{t} \}_{t \geq 0}$ of Kac's walk on the sphere with starting points $X_{0} = x \in \sn$ and $\mathcal{L}(Y_{0}) = \mu$.  The coupling is as follows:
\begin{enumerate}
\item Couple $\{X_{t} \}_{0 \leq t \leq T'_{2}(n)}$, $\{Y_{t} \}_{0 \leq t \leq T'_{2}(n)}$ by using, at each step, the \textit{proportional coupling} as in Definition \ref{def:propcoup}.
\item Conditional on $\{X_{t} \}_{0 \leq t \leq T'_{2}(n)}$, $\{Y_{t} \}_{0 \leq t \leq T'_{2}(n)}$, couple $\{X_{t} \}_{T'_{2}(n) \leq t \leq T_2(n)}$, $\{Y_{t} \}_{T'_{2}(n) \leq t \leq T_2(n)}$ according to the non-Markovian coupling constructed in  Section \ref{DefCoupCon}. Thus in the notation of Section \ref{DefCoupCon}, we have $T_0 = T_2'(n)$ and $T = T_2(n)$.
\item Conditional on $\{X_{t} \}_{0 \leq t \leq T_{2}(n)}$, $\{Y_{t} \}_{0 \leq t \leq T_{2}(n)}$, we couple the remaining steps of the two chains as follows. First, run $\{X_{t} \}_{t> T_{2}(n)}$ conditional on $\{X_{t} \}_{0 \leq t \leq T_{2}(n)}$ according to its distribution. If $X_{T_{2}(n)} = Y_{T_{2}(n)}$, set $Y_{t} = X_{t}$ for all $t > T_{2}(n)$. If $X_{T_{2}(n)} \neq Y_{T_{2}(n)}$, run $\{Y_{t} \}_{t> T_{2}(n)}$ conditional on $\{Y_{t} \}_{0 \leq t \leq T_{2}(n)}$ according to its distribution and independently of $\{X_{t}\}_{t > T_{2}(n)}$.\footnote{The details of this third step of the coupling are irrelevant to the following analysis. They are provided only to give a concrete coupling that satisfies the condition $\P[ \forall t > \tau(x), \, X_{t} = Y_{t}] = 1$ in the statement of Lemma \ref{LemmaIneqCoup}.}

\end{enumerate} 
Define the events
\be
\mathcal{E}_{1} &= \{  \| A_{T'_{2}(n)} - B_{T'_{2}(n)} \|_{1} > n^{-a} \} \cup \{ \min_{1 \leq i \leq n} X_{T'_{2}(n)}[i] Y_{T'_{2}(n)}[i] < 0 \} \\
\mathcal{E}_{2} &= \{ X_{T_2(n)} \neq Y_{T_2(n)} \} \\
\mathcal{E}_{3} &= \{ \mathcal{P}_{T'_{2}(n)} \neq \{1,2,\ldots,n\}\}.
\ee
By Lemma \ref{LemmaIneqCoup}, 
\be 
\sup_{X_{0} = x \in \sn} \| \mathcal{L}(X_{T_2(n)}) - \mu \|_{\TV} &\leq \sup_{X_{0} = x \in \sn} \P[\mathcal{E}_{2}] \\
&\leq \sup_{X_{0} = x \in \sn} (\P[\mathcal{E}_{1}] + \P[\mathcal{E}_{3}] + \P[\mathcal{E}_{2} \cap \mathcal{E}_{1}^{c} \cap \mathcal{E}_{3}^{c}]).\label{IneqMainThmUpper1}
\ee 
The remainder of the proof of inequality  \eqref{EqMainThmUpperBound} will consist of bounding these three terms, in order.

Applying inequality \eqref{IneqUseCouponCont} and then Lemma \ref{LemmaContEst} and Markov's inequality, 
\be \label{IneqMainThmUpper2a}
 \lim_{n \rightarrow \infty} & \sup_{X_{0} = x \in \sn}\P[\mathcal{E}_{1}] \\
&\leq \lim_{n \rightarrow \infty}  \sup_{X_{0} = x \in \sn} (\P[ \| A_{T'_{2}(n)} - B_{T'_{2}(n)} \|_{1} \geq n^{-a}] + \P[\min_{1 \leq i \leq n} X_{T'_{2}(n)}[i] Y_{T'_{2}(n)}[i] < 0])\\
&\leq \lim_{n \rightarrow \infty}  \sup_{X_{0} = x \in \sn}( \P[ \| A_{T'_{2}(n)} - B_{T'_{2}(n)} \|_{2} \geq n^{-a-1}] + n e^{-\frac{T_{2}'(n)}{n}}) \\
&\leq \lim_{n \rightarrow \infty} (n^{2a + 2} (1 - \frac{1}{2n})^{T'_{2}(n)} + n^{-1}) = 0.
\ee 
By Lemma \ref{LemmaPartSplit}, 
\be \label{IneqMainThmUpper2b}
\lim_{n \rightarrow \infty}  \sup_{X_{0} = x \in \sn} \P[\mathcal{E}_{3}] = 0.
\ee 
Recall the event $\mathcal{A}(s)$ from Equation \eqref{EqGoodCouplingSet}:
\be 
\mathcal{A}(s) = \{ \text{Equations \eqref{EqDefOfGoodChain} and \eqref{EqDefOfGoodChain2} are satisfied for all } T'_{2}(n) \leq t \leq s. \}. 
\ee 
Our next goal is, roughly, to show that \textit{conditional on the first phase of the coupling having gone according to plan}, the set $\mathcal{A}(s+1) \backslash \mathcal{A}(s)$ has small probability; this is stated formally in Inequality \eqref{IneqMainIndStep1}. 

For $T_2'(n) \leq s < T_2(n)$, define
\be
\mathcal{B}(s) = \{ \min_{T_2'(n) \leq t < s} \min_{1 \leq i \leq n} Y_{t}[i]^{2} \geq \frac{1}{2} n^{-b} \}.
\ee 
We will bound $\P[\mathcal{E}_{2} \cap \mathcal{E}_{1}^{c} \cap \mathcal{E}_{3}^{c}]$ by showing that $\P[\mathcal{A}(t+1)^{c} \cap \mathcal{A}(t) \cap \mathcal{B}(t) \cap \mathcal{E}_{1}^{c} \cap \mathcal{E}_{3}^{c}]$ is small for all $T'_{2}(n) \leq t \leq T_{2}(n)$. \par
To this end, first note that
\be
\P[\mathcal{A}(T'_2(n))^{c} \cap \mathcal{E}_3^c] = 0.
\ee

Recall the set $\mathcal{S}$ from the construction of non-Markovian coupling from Section \ref{DefCoupCon}. We now consider two cases: $t \in \mathcal{S}$ and $t \in \{ T'_{2}(n),\ldots, T_2(n)\} \backslash \mathcal{S}$. In the case $t \in \{ T'_{2}(n),\ldots, T_2(n)\} \backslash \mathcal{S}$, we have
\be
\sum_{l \in P_r(t)} A_t[l] = \sum_{l \in P_r(t+1)} A_{t+1}[l],\,\quad P_{r}(t+1)= {P}_{r}(t) \in \mathcal{P}_t, \quad 1 \leq r \leq \ell_{t}.
\ee
This implies that
\be \label{IneqMainIndStep3} 
\P[\mathcal{A}(t+1)^{c} \cap \mathcal{A}(t) \cap \mathcal{B}(t) \cap \mathcal{E}_{1}^{c} \cap \mathcal{E}_{3}^{c}] \leq \P[\mathcal{A}(t+1)^{c} \cap \mathcal{A}(t) \cap \mathcal{E}_{3}^{c}]  = 0.
\ee 
Next, consider $t = s_{k} \in \mathcal{S}$. We will apply Lemma \ref{LemmaCoupSuccBound} with the random variables:
\be[EqDefSVars]
S &= \sum_{r \in P_{u(s_{k})}(s_{k}+1)\setminus \{i_{s_k}\}} X_{s_k}[r]^2 +  
(X_{s_k}[i_{s_k}]^2 + X_{s_k}[j_{s_k}]^2) \cos(\theta_{s_k})^2 \\
S' &= \sum_{r \in P_{u(s_{k})}(s_{k} + 1)\setminus \{i_{s_k}\}} Y_{s_k}[r]^2 +  
(Y_{s_k}[i_{s_k}]^2 + Y_{s_k}[j_{s_k}]^2) \cos(\theta_{s_k})^2.
\ee

On the event of $\mathcal{A}(s_k) \cap \mathcal{E}_1^c \cap \mathcal{E}_3^c$, Equation \eqref{eqn:AtBt1norm} of Lemma \ref{LemmaCloseness} gives that
\be \label{eqn:askbskt2}
\|A_{s_k} - B_{s_k}\|_1 \leq n\|A_{T_2'(n)} - B_{T_2'(n)}\|_1 \leq n^{1-a},
\ee
and so on the event $\mathcal{A}(s_{k}) \cap \mathcal{B}(s_{k}) \cap \mathcal{E}_{1}^{c} \cap \mathcal{E}_{3}^{c}$ we also have 
\be 
\min_{T_2'(n) \leq t < s} \min_{1 \leq i \leq n} \min( X_{t}[i]^{2}, Y_{t}[i]^{2}) \geq  n^{-b}.
\ee 
Thus, on the event $\mathcal{A}(s_{k}) \cap \mathcal{B}(s_{k}) \cap \mathcal{E}_{1}^{c} \cap \mathcal{E}_{3}^{c}$, the random variables $S,S'$ given in Equation \eqref{EqDefSVars} satisfy the assumptions of Lemma \ref{LemmaCoupSuccBound}, with $p=b$, $q= a-1$ and $q' = \frac{2(a-1)}{5} \in (b, \frac{a-1}{2})$. Thus, by  Lemma \ref{LemmaCoupSuccBound},
\be
\P\Big[\big \{ \sum_{i \in P_{u(s_{k})}(s_{k}+1)}& X_{s_{k}+1}[i]^{2} \neq \sum_{i \in P_{u(s_{k})}(s_{k}+1)} Y_{s_{k}+1}[i]^{2} \big \} \cap \mathcal{A}(s_{k}) \cap \mathcal{B}(s_{k}) \cap \mathcal{E}_{1}^{c} \cap \mathcal{E}_{3}^{c}\Big]\\
& \leq 6 \times 10^{3} n^{- \frac{2(a-1)}{5}}.  \label{IneqAlmostMainIndStep1}
\ee 
By the constraint $\sum_{i=1}^{n} A_{t+1}[i] = \sum_{i=1}^{n} B_{t+1}[i] = 1$, we have
\be 
\big \{ \sum_{i \in P_{u(s_{k})}(s_{k}+1)} X_{s_{k}+1}[i]^{2} \neq \sum_{i \in P_{u(s_{k})}(s_{k}+1)} Y_{s_{k}+1}[i]^{2} \big \} \cap \mathcal{A}(s_{k}) = \mathcal{A}(s_{k}+1)^{c}\cap \mathcal{A}(s_{k}).
\ee 
Combining this with inequality \eqref{IneqAlmostMainIndStep1} gives
\be \label{IneqMainIndStep1}
\P[\mathcal{A}(s_{k}+1)^{c} \cap \mathcal{A}(s_{k}) \cap \mathcal{B}(s_{k}) \cap \mathcal{E}_{1}^{c} \cap \mathcal{E}_{3}^{c} ] \leq 6 \times 10^{3} n^{- \frac{2(a-1)}{5}}.
\ee 

By Lemma \ref{LemmaSmallValues} and a union bound over times $T_2'(n) \leq t \leq T_{2}(n)$ and indices $1 \leq i \leq n$, we have for all sufficiently large $n$,
\be \label{IneqMainIndStep2}
\P[\mathcal{B}(T_2(n))^{c}] \leq 3 n^{4-\frac{b}{3}}.
\ee
Combining inequalities \eqref{IneqMainIndStep1} and \eqref{IneqMainIndStep3} and then \eqref{IneqMainIndStep2}, we have  
\be 
\P[\mathcal{E}_{2} \cap \mathcal{E}_{1}^{c} \cap \mathcal{E}_{3}^{c}] &= \P[ \mathcal{A}(T_2(n))^{c} \cap \mathcal{E}_{1}^{c} \cap \mathcal{E}_{3}^{c}] \\
&\leq \sum_{s=T_2(n)-T_2'(n)}^{T_2(n)} \P[ \mathcal{A}(T_2(n)-s+1)^{c} \cap \mathcal{A}(T_2(n)-s) \cap \mathcal{E}_{1}^{c}\cap \mathcal{E}_{3}^{c}]  + \P[\mathcal{A}(T_2'(n))^{c} \cap \mathcal{E}_{1}^{c} ] \\
&\leq \sum_{s=T_2(n)-T_2'(n)}^{T_2(n)} (\P[ \mathcal{A}(T_2(n)-s+1)^{c} \cap \mathcal{A}(T_2(n)-s) \cap \mathcal{B}(T_2(n)-s) \cap \mathcal{E}_{1}^{c} \cap \mathcal{E}_{3}^{c}]  \\
&\hspace{4cm}+ \P[\mathcal{B}(T_2(n) -s)^{c}]  )+ \P[\mathcal{A}(T_2'(n))^{c} \cap \mathcal{E}_{1}^{c}] \\
&\\
&\leq 6 \times 10^{3} \,T_2(n) \, n^{- \frac{2(a-1)}{5}}  + T_2(n)\, n^{4-\frac{b}{3}} + 0 \\
&\leq 6 \times 10^{3}  \, n^{2 - \frac{2(a-1)}{5}}   +  n^{6-\frac{b}{3}}.
\ee 
Since $b > 18$ and $a > 6$, this implies 
\be \label{IneqMainThmUpper3}
\lim_{n \rightarrow \infty}  \sup_{X_{0} = x \in \sn} \P[\mathcal{E}_{2} \cap \mathcal{E}_{1}^{c} \cap \mathcal{E}_{3}^{c}] = 0.
\ee 

Inequality \eqref{EqMainThmUpperBound} follows immediately from inequalities \eqref{IneqMainThmUpper1}, \eqref{IneqMainThmUpper2a}, \eqref{IneqMainThmUpper2b} and \eqref{IneqMainThmUpper3}. For sequences with $T_{2}(n) \leq n^2$ for all $n$ sufficiently large, this completes the proof of inequality \eqref{EqMainThmUpperBound}. For sequences with $T_{2}(n) > n^2$ infinitely often, inequality \eqref{EqMainThmUpperBound} follows from the case $T(n) = n^{2}$ by the fact that the total variation distance to stationarity of a Markov chain is monotonely decreasing in time. This completes the proof of Theorem \ref{MainThm}.

\section*{Acknowledgement}
NSP is partially supported by an ONR grant. AMS is partially supported by an NSERC grant.
\bibliographystyle{alpha}
\bibliography{KTCBib}

\end{document}